       \font\tenmsb=msbm10
       \font\sevenmsb=msbm7
       \font\fivemsb=msbm5
\let\amstexloaded@\relax\fi
       \def\spaces@{\space\space\space\space\space}
       \def\spaces@@{\spaces@\spaces@\spaces@\spaces@\spaces@}
       \def\space@.  {\futurelet\space@\relax}
       \def\Err@#1{\errhelp\defaulthelp@\errmessage{AmS-TeX error: #1}}
       \def\relaxnext@{\let\next\relax}
       \def\accentfam@{7}
       \def\noaccents@{\def\accentfam@{0}}
       \def\Cal{\relaxnext@\ifmmode\let\next\Cal@\else
       \def\next{\Err@{Use \string\Cal\space only in math mode}}\fi\next}
       \def\Cal@#1{{\Cal@@{#1}}}
       \def\Cal@@#1{\noaccents@\fam\tw@#1}
       \def\Bbb{\relaxnext@\ifmmode\let\next\Bbb@\else
       \def\next{\Err@{Use \string\Bbb\space only in math mode}}\fi\next}
       \def\Bbb@#1{{\Bbb@@{#1}}}
       \def\Bbb@@#1{\noaccents@\fam\msbfam#1}
\theoremstyle{plain}
\newtheorem{theorem}{Theorem}[section]
\newtheorem{corollary}[theorem]{Corollary}
\newtheorem{lemma}[theorem]{Lemma}
\numberwithin{equation}{section}
\newtheorem{remark}[theorem]{Remark}
\theoremstyle{definition}
\newtheorem{definition}[theorem]{Definition}
\begin{document}
\numberwithin{equation}{section}
\title{{\bf The $W$-weighted $m$-weak group MP inverse  \\ and its applications}}

\author{{\ Jiale Gao$^{a}$,  Kezheng Zuo$^{b}$\footnote{E-mail address: xiangzuo28@163.com (K. Zuo).}, Qing-Wen Wang$^{a}$}
\\ {{\small$^{a}$Department of Mathematics, Shanghai University, Shanghai 200444, China}}
\\{{\small $^b$School of Mathematics and Statistics, Hubei Normal University, Huangshi 435002, China}}}

\date{}

\maketitle

\begin{center}
\begin{minipage}{135mm}
{{\small {\bf Abstract:}
We extend the concept of the  $m$-weak group MP inverse of a square matrix to a rectangular matrix, called the   $W$-weighted $m$-weak group
MP inverse, which also unifies the $W$-weighted weak core inverse and  $W$-weighted DMP inverse.   Some properties, characterizations and representations of this new generalized inverse are shown.  Additionally, applications of the $W$-weighted weak group MP inverse are  given in solving a constrained optimization problem and  a class of consistent matrix equations.

\begin{description}
\item[{\bf Key words}:]$W$-weighted $m$-weak group
MP inverse, $m$-weak group MP inverse,  weighted core-EP decomposition, constrained optimization problem
\item[{\bf AMS subject classifications}:] 15A09, 15A24
\end{description}
}}
\end{minipage}
\end{center}

\section{Introduction}\label{introductionsection}
 Generalized inverses are studied in many branches of mathematical field: matrix theory, operator theory,  $C^*$-algebras, rings, etc. Additionally,  generalized inverses turn out to be a powerful tool in many applications such as  Markov chains,
differential equations, difference equations, electrical networks, optimal control, and so on. Refer to  \cite{Benbookre,Draganabookre,Wangbookre}.

A wave of research on generalized inverses has started  since Penrose \cite{PenroseMPinref} introduced the Moore-Penrose inverse in 1955.
In 1958, Drazin \cite{Drazininvref}  defined the Drazin inverse with spectral properties  in associative rings and semigroups.
Specifically, the Drazin inverse of a group matrix is also known as its  group inverse \cite{Erdegroupinversere}.
The core inverse as an alternative of  the group inverse is  presented   by Baksalary and  Trenkler  \cite{Bakscoreinversere} in 2010.
 Subsequently,   there  has been tremendous interest in the generalizations  of the core inverse, such as the BT inverse \cite{BaksalaryBTinref}, core-EP inverse \cite{PrasadcoreEPref}, DMP inverse \cite{MalikDMPiref}, weak core inverse \cite{Ferreyraweakcoreinref}, $m$-weak core inverse \cite{FerreyramWeakcoreref}.
  Recently, the $m$-weak group MP inverse  \cite{Jiangmwgmpiref} defined by we as a generalization of the core inverse also unifies the weak core  inverse and  DMP inverse.

As is well known,  the study of weighted generalized inverses is a significant research direction in the development history of generalized inverses, such as  the weighted Moore-Penrose inverse \cite{ChipmanWMPinref},  $W$-weighted Drazin inverse \cite{wdrazinclinere},
$W$-weighted BT inverse  \cite{FerreyraWBTinvref}, weighted core-EP inverse \cite{FerreyraWeCoerEPinref}, $W$-weighted DMP inverse \cite{MengWWDMPinvref}, and  $W$-weighted weak core inverse \cite{WWCoreDijre}.

  Inspired by the above facts, our  first research objective is to introduce the $W$-weighted $m$-weak group
MP inverse (in short, $W$-$m$-WGMP inverse), which serves as a generalization of the $m$-weak group MP inverse and unifies the $W$-weighted weak core inverse and  $W$-weighted DMP inverse.
We further show some of  its basic properties, characterizations and representations.

Generalized inverses have always played   a significant role in solving  the matrix equations.
  Wang et al. \cite{wangWGappref} recently discovered an interesting  result regarding the application of the weak group  inverse to the  least-squares solution of the constrained system of linear equations.   Motivated by the above result and the fact that $W$-weighted $m$-weak group  inverse \cite{WmWGinGaore} is a generalization of the weak group inverse, the second purpose of this paper is to extend this  result to the $W$-weighted $m$-weak group  inverse  and use its relationship with the $W$-$m$-WGMP inverse  to obtain the application of the   $W$-$m$-WGMP inverse in  a constrained optimization problem.   In addition, another  application of the   $W$-$m$-WGMP inverse are given in  solving  a class of consistent matrix equations.

 The paper is organized as follows.
 Some notations, definitions  and lemmas are recalled in Section \ref{Preliminaries}.  We devote Section \ref{weightedsection2} to defining the $W$-$m$-WGMP inverse and discussing  its basic properties, characterizations and representations.  In Section \ref{Applicationssection}, its applications  are considered  in solving the   matrix equations.  Conclusions are stated in Section \ref{Conclusionssection}.

\section{Preliminaries}\label{Preliminaries}

We now review some necessary notations, definitions  and lemmas.
Let $\mathbb{C}^{q\times n}$ and $\mathbb{N}^{+}$ be the set of all $q\times n$ complex matrices and the set of  all positive integers, respectively.   For $A\in\mathbb{C}^{q\times n}$, the symbols
$A^*$, $A^{-1}$, ${\rm Ind}(A)$,  ${\rm rank}(A)$, $\mathcal{R}(A)$ and $\mathcal{N}(A)$, ${\rm det}(A)$,  and $\Vert A\Vert_F$ denote the conjugate transpose, inverse ($q=n$), index ($q=n$),  rank, range, null space, determinant ($q=n$), and  Frobenius
norm  of $A$, respectively.
We  define that  $A^{0}=I_n$, where $A\in\mathbb{C}^{n\times n}$ and  $I_n$ is the identity matrix  of order $n$. And, $0$ denotes  the    null matrix of an appropriate size. The
projector onto  $\mathcal{T}$   along  $\mathcal{S}$ is denoted by  $P_{\mathcal{T},\mathcal{S}}$, where subspaces $\mathcal{T}$ and  $\mathcal{S}$   satisfy  their direct sum is equal to $\mathbb{C}^{n\times 1}$.
Particularly,  the orthogonal projector  onto $\mathcal{T}$ is $P_{\mathcal{T}}=P_{\mathcal{T},\mathcal{T}^{\perp}}$, where $\mathcal{T}^{\perp}$ presents the orthogonal complement of $\mathcal{T}$.  For $A\in\mathbb{C}^{q\times n}$ and a subspaces $\mathcal{W}$ in $\mathbb{C}^{n \times 1}$,  we define that  $A\mathcal{W}=\{Ax ~|~x\in\mathcal{W}\}$.
 The matrix obtained by replacing the $i$th column of $A\in\mathbb{C}^{q\times n}$  with $\beta \in  \mathbb{C}^{q\times 1} $ is denoted by $A({i \to {\beta}} )$,  where $i=1,2,...,n$.

  Throughout this paper,   $A^{D}$, $A^{\#}$, $A^{\textcircled{\dag}}$,  $A^{D,\dag}$, $A^{\textcircled{w}}$, $A^{\textcircled{w}_m}$, and $A^{\textcircled{w}_m,\dag}$ denote the Drazin inverse, group inverse, core-EP inverse, DMP inverse, weak group inverse, $m$-weak  group inverse \cite{wmweakgroupJiare}, and  $m$-weak  group MP inverse of $A\in\mathbb{C}^{n\times n}$,    respectively.  And, the symbols $A^{\dag}$, $A^{(2)}_{\mathcal{T},\mathcal{S}}$, $A^{D,W}$, $A^{\#,W}$, $A^{\textcircled{\dag},W}$, $A^{D,\dag}_w$, $A^{\textcircled{w},W}$, $A^{\textcircled{w},W,\dag}$,  and $A^{\textcircled{w}_m,W}$ represent the Moore-Penrose inverse, outer inverse with the range $\mathcal{T}$ and  null space $\mathcal{S}$, $W$-weighted Drazin inverse, $W$-weighted group inverse,  weighted  core-EP  inverse, $W$-weighted DMP inverse, $W$-weighted weak group inverse \cite{wweakgroupFerre}, $W$-weighted weak core inverse, and $W$-weighted $m$-weak group inverse of $A$, respectively,
where  $A\in\mathbb{C}^{q\times n}$, $W(\neq 0)\in\mathbb{C}^{n\times q}$ and  $m\in\mathbb{N}^{+}$. And, let $A\{2\}=\{X\in\mathbb{C}^{n\times q}~|~XAX=X\}$, for  $A\in\mathbb{C}^{q\times n}$.

Let $A\in\mathbb{C}^{q\times n}$, $B\in\mathbb{C}^{q\times n}$ and $W(\neq 0)\in\mathbb{C}^{n\times q}$.
The $W$-product \cite{FerreyraWeCoerEPinref} of $A$ and $B$   is defined   as $  A\star B= A W B$.
And,    we define
  the $W$-product of $A$ with itself $l\in\mathbb{N}^{+}$ times by $A^{\star l}$, i.e.,
\begin{equation*}
A^{\star l}=  \underbrace {A \star A\star... \star A}_{l \text{ times}} .
\end{equation*}
 Specially,  we define that   $A^{\star 0} W=I_{q}$ and $WA^{\star 0}  =I_{n}$.

\begin{lemma}[Weighted core-EP decomposition]\label{WcoreEPdecla}\cite[Theorem 4.1]{FerreyraWeCoerEPinref}
Let $A\in\mathbb{C}^{q\times n}$,   $W(\neq 0)\in\mathbb{C}^{n\times q}$ and $k=\max\{{\rm Ind}(AW),{\rm Ind}(WA)\}$.  Then,
\begin{equation}\label{AWdeceqs}
  A=U\left(
       \begin{array}{cc}
         A_1 & A_2 \\
         0 & A_3 \\
       \end{array}
     \right)
  V^* \text{ and }
  W=V\left(
       \begin{array}{cc}
        W_1 & W_2 \\
         0 & W_3 \\
       \end{array}
     \right)
  U^*,
\end{equation}
where $U\in\mathbb{C}^{q\times q}$ and  $V\in\mathbb{C}^{n\times n}$ are unitary matrices,
 $A_1\in\mathbb{C}^{t\times t }$ and $W_1\in\mathbb{C}^{t\times t }$ are nonsingular matrices,  $A_2\in\mathbb{C}^{t\times (n-t)}$, $W_2\in\mathbb{C}^{t\times (q-t)}$, and  $A_3W_3\in\mathbb{C}^{(q-t)\times (n-t) }$ and $W_3A_3\in\mathbb{C}^{(n-t) \times (q-t) }$ are nilpotent  of indices ${\rm Ind}(AW)$ and ${\rm Ind}(WA)$, respectively.
 \end{lemma}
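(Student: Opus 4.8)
The plan is to read the two unitary matrices off the ranges of the square matrices $AW\in\mathbb C^{q\times q}$ and $WA\in\mathbb C^{n\times n}$, and then to transport the core-EP structure of $AW$ and of $WA$ simultaneously. First I would set $t:=\dim\mathcal R((AW)^{k})$ and check that $t=\dim\mathcal R((WA)^{k})$: since $k\ge\max\{\mathrm{Ind}(AW),\mathrm{Ind}(WA)\}$, the rank sequences of the powers of $AW$ and of $WA$ have stabilized at exponent $k$, and from $(AW)^{k+1}=A(WA)^{k}W$ and $(WA)^{k+1}=W(AW)^{k}A$ we get $\mathrm{rank}((AW)^{k+1})\le\mathrm{rank}((WA)^{k})$ and $\mathrm{rank}((WA)^{k+1})\le\mathrm{rank}((AW)^{k})$, forcing the two stable ranks to agree. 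I would then pick unitary $U=[\,U_1\ \ U_2\,]\in\mathbb C^{q\times q}$ with $\mathcal R(U_1)=\mathcal R((AW)^{k})$, $U_1\in\mathbb C^{q\times t}$, and unitary $V=[\,V_1\ \ V_2\,]\in\mathbb C^{n\times n}$ with $\mathcal R(V_1)=\mathcal R((WA)^{k})$, $V_1\in\mathbb C^{n\times t}$, and define $A_1:=U_1^{*}AV_1$, $A_2:=U_1^{*}AV_2$, $A_3:=U_2^{*}AV_2$ and $W_1:=V_1^{*}WU_1$, $W_2:=V_1^{*}WU_2$, $W_3:=V_2^{*}WU_2$, so that \eqref{AWdeceqs} holds by construction once the $(2,1)$ blocks vanish.

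The $(2,1)$ blocks are $U_2^{*}AV_1$ and $V_2^{*}WU_1$, so it suffices to prove $A\,\mathcal R((WA)^{k})\subseteq\mathcal R((AW)^{k})$ and $W\,\mathcal R((AW)^{k})\subseteq\mathcal R((WA)^{k})$. Both are immediate from the intertwining identities $A(WA)^{k}=(AW)^{k}A$ and $W(AW)^{k}=(WA)^{k}W$: indeed $A\,\mathcal R((WA)^{k})=(AW)^{k}\mathcal R(A)\subseteq\mathcal R((AW)^{k})$, and symmetrically for $W$. Hence $U_2^{*}AV_1=0$ and $V_2^{*}WU_1=0$, which yields the two displayed block forms with the stated block sizes.

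For the remaining assertions I would multiply the block forms, obtaining $U^{*}(AW)U=\left(\begin{smallmatrix}A_1W_1&A_1W_2+A_2W_3\\0&A_3W_3\end{smallmatrix}\right)$ and $V^{*}(WA)V=\left(\begin{smallmatrix}W_1A_1&W_1A_2+W_2A_3\\0&W_3A_3\end{smallmatrix}\right)$. Because $\mathcal R((AW)^{k})$ is $AW$-invariant and equals $\mathcal R((AW)^{k+1})$, the map $AW$ restricts to a bijection of $\mathcal R(U_1)$, so its matrix $A_1W_1=U_1^{*}(AW)U_1$ in the orthonormal basis $U_1$ is invertible; being $t\times t$, this forces both $A_1$ and $W_1$ to be nonsingular, and the same argument applied to $WA$ on $\mathcal R(V_1)$ makes $W_1A_1$ nonsingular. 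Finally, using $\mathrm{rank}\bigl(\begin{smallmatrix}B&C\\0&D\end{smallmatrix}\bigr)=\mathrm{rank}(B)+\mathrm{rank}(D)$ whenever $B$ is invertible, applied to the $j$-th powers of the two block matrices (whose $(1,1)$ blocks are the still-invertible $(A_1W_1)^{j}$ and $(W_1A_1)^{j}$), I get $\mathrm{rank}((AW)^{j})=t+\mathrm{rank}((A_3W_3)^{j})$ and $\mathrm{rank}((WA)^{j})=t+\mathrm{rank}((W_3A_3)^{j})$; taking $j=k$ shows $A_3W_3$ and $W_3A_3$ are nilpotent, and locating the exponent at which the left-hand rank sequences stabilize identifies the nilpotency indices as exactly $\mathrm{Ind}(AW)$ and $\mathrm{Ind}(WA)$.

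The step I expect to be the main obstacle is the bookkeeping around the single integer $t$: one must argue cleanly that $\mathcal R((AW)^{k})$ and $\mathcal R((WA)^{k})$ have the same dimension — so that $A_1W_1$ and $W_1A_1$ are genuinely square and the off-diagonal blocks have matching shapes — and then that the nilpotency indices come out precisely $\mathrm{Ind}(AW)$ and $\mathrm{Ind}(WA)$ rather than merely $\le k$. Once those two points are handled, the rest is just a unitary change of basis combined with the two intertwining identities.
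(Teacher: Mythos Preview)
Your argument is correct and complete. The equality of the two stable ranks via $(AW)^{k+1}=A(WA)^{k}W$ and $(WA)^{k+1}=W(AW)^{k}A$, the vanishing of the $(2,1)$ blocks from the intertwining identities $A(WA)^{k}=(AW)^{k}A$ and $W(AW)^{k}=(WA)^{k}W$, the invertibility of $A_{1}$ and $W_{1}$ forced by that of the $t\times t$ product $A_{1}W_{1}$, and the identification of the nilpotency indices through the rank relation $\mathrm{rank}((AW)^{j})=t+\mathrm{rank}((A_{3}W_{3})^{j})$ are all sound. The point you flag as the main obstacle---matching the single integer $t$ on both sides and pinning down the exact nilpotency indices---is indeed the only place requiring care, and you handle it correctly.

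There is nothing in the present paper to compare against: Lemma~\ref{WcoreEPdecla} is stated without proof and cited from \cite[Theorem~4.1]{FerreyraWeCoerEPinref}, then used as a black box in Theorem~\ref{WmWGWGMPdecTh}. Your construction---choosing $U$ and $V$ symmetrically from orthonormal bases of $\mathcal{R}((AW)^{k})$ and $\mathcal{R}((WA)^{k})$ and then verifying compatibility---is a clean self-contained derivation. The original reference obtains $U$ first from the (square) core-EP decomposition of $AW$ and then extracts $V$; your more symmetric treatment avoids privileging one of the two products, at the cost of needing the explicit rank-equality argument up front, but the content is the same.
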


\begin{lemma}[Weighted  Hartwig-Spindelb\"{o}ck decomposition]\label{wHSdeclemma}
Let $A\in\mathbb{C}^{q\times n}$ be of ${\rm rank}(A)=r_1$ and  $W(\neq 0)\in\mathbb{C}^{n\times q}$ be of  ${\rm rank}(W)=r_2$. Then,
\begin{equation}\label{wHSdecAWeq}
  A=T\left(
      \begin{array}{cc}
     \Sigma_1 K_1     &       \Sigma_1 L_1  \\
      0   & 0 \\
      \end{array}
    \right)S^*
    \text{ and }
    W=S\left(
      \begin{array}{cc}
    \Sigma_2 K_2     &       \Sigma_2 L_2  \\
      0   & 0 \\
      \end{array}
    \right)  T^*,
\end{equation}
where $T \in\mathbb{C}^{q\times q}$ and  $S \in\mathbb{C}^{n\times n}$ are  unitary matrices,  $ \Sigma_1 \in\mathbb{C}^{r_1\times r_1}$ and  $\Sigma_2 \in\mathbb{C}^{r_2\times r_2}$ are nonsingular matrices, and $K_1\in\mathbb{C}^{r_1\times r_1}$, $K_2 \in\mathbb{C}^{r_2\times r_2}$, $L_1\in\mathbb{C}^{r_1\times(n- r_1)}$ and $K_2 \in\mathbb{C}^{r_2\times(q- r_2)}$ are such that $K_1K_1^*+L_1L_1^*=I_{r_1}$ and $K_2K_2^*+L_2L_2^*=I_{r_2}$ .
\end{lemma}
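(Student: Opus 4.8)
The plan is to read off both decompositions from the singular value decompositions (SVDs) of $A$ and of $W$, taking $T$ to be the matrix of left singular vectors of $A$ and $S$ to be the matrix of left singular vectors of $W$, and then re-expressing the remaining unitary factor in each SVD through a unitary change of basis.

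First I would write an SVD of $A$ as $A=T\begin{pmatrix}\Sigma_1 & 0\\ 0 & 0\end{pmatrix}\widehat{S}^{*}$, where $T\in\mathbb{C}^{q\times q}$ and $\widehat{S}\in\mathbb{C}^{n\times n}$ are unitary and $\Sigma_1\in\mathbb{C}^{r_1\times r_1}$ is the diagonal matrix of the nonzero singular values of $A$, hence nonsingular; and an SVD of $W$ as $W=S\begin{pmatrix}\Sigma_2 & 0\\ 0 & 0\end{pmatrix}\widehat{T}^{*}$ with $S\in\mathbb{C}^{n\times n}$, $\widehat{T}\in\mathbb{C}^{q\times q}$ unitary and $\Sigma_2\in\mathbb{C}^{r_2\times r_2}$ nonsingular diagonal. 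These $T$ and $S$ will be the two unitary factors in the statement; the essential point is that $T$ comes from $A$ while $S$ comes from $W$.

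Next I would replace $\widehat{S}^{*}$ by $(\widehat{S}^{*}S)S^{*}$ and $\widehat{T}^{*}$ by $(\widehat{T}^{*}T)T^{*}$. Partitioning the $n\times n$ unitary matrix $\widehat{S}^{*}S=\begin{pmatrix}K_1 & L_1\\ M_1 & N_1\end{pmatrix}$ conformally, with $K_1\in\mathbb{C}^{r_1\times r_1}$, and carrying out the block multiplication yields $A=T\begin{pmatrix}\Sigma_1 K_1 & \Sigma_1 L_1\\ 0 & 0\end{pmatrix}S^{*}$; similarly, partitioning the $q\times q$ unitary matrix $\widehat{T}^{*}T=\begin{pmatrix}K_2 & L_2\\ M_2 & N_2\end{pmatrix}$ with $K_2\in\mathbb{C}^{r_2\times r_2}$ yields $W=S\begin{pmatrix}\Sigma_2 K_2 & \Sigma_2 L_2\\ 0 & 0\end{pmatrix}T^{*}$. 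The block sizes $K_1\in\mathbb{C}^{r_1\times r_1}$, $L_1\in\mathbb{C}^{r_1\times(n-r_1)}$, $K_2\in\mathbb{C}^{r_2\times r_2}$, $L_2\in\mathbb{C}^{r_2\times(q-r_2)}$ are precisely the ones forced by these partitions.

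Finally, the norm identities are automatic: $(K_1\mid L_1)$ consists of the first $r_1$ rows of the unitary matrix $\widehat{S}^{*}S$, so its rows are orthonormal and $K_1K_1^{*}+L_1L_1^{*}=I_{r_1}$; likewise $(K_2\mid L_2)$ consists of the first $r_2$ rows of $\widehat{T}^{*}T$, giving $K_2K_2^{*}+L_2L_2^{*}=I_{r_2}$. I do not expect a genuine obstacle here: the argument is bookkeeping, and the only thing to watch is the size asymmetry ($A\in\mathbb{C}^{q\times n}$ versus $W\in\mathbb{C}^{n\times q}$), which is exactly what makes $T$ appear on the left in the formula for $A$ but on the right in the formula for $W$, and dually for $S$.
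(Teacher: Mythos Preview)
Your argument is correct: it is the natural SVD-based derivation of the Hartwig--Spindelb\"{o}ck form, and the bookkeeping you describe goes through exactly as you indicate.

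As for comparison, the paper does not supply a proof of this lemma at all; it is stated in the Preliminaries section without argument and without an explicit citation attached to it (unlike the neighbouring lemmas, which carry references). So there is nothing in the paper to compare your approach against. Your SVD route is the standard way to obtain this decomposition, and since the paper treats the result as known background, your proof fills a gap rather than competing with an existing one.
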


\begin{lemma}\label{WmWGproperth}\cite[Theorems 3.3 and 3.4]{WmWGinGaore}
Let $A\in\mathbb{C}^{q\times n}$, $W(\neq 0)\in\mathbb{C}^{n\times q}$, $m\in\mathbb{Z}^{+}$ and $k=\max\{{\rm Ind}(AW),{\rm Ind}(WA)\}$. Then:
 \begin{enumerate}[$(1)$]
 \item\label{WmWGproperit00}
 $ A^{\textcircled{w}_m,W}=(A^{\textcircled{\dag},W})^{\star (m+1)}WA^{\star m}$;
   \item\label{WmWGproperit01} $ A^{\textcircled{w}_m,W}\in (WAW)\{2\}$;
   \item\label{WmWGproperit03} $\mathcal{R}(A^{\textcircled{w}_m,W})=\mathcal{R}((AW)^k)$;
   \item\label{WmWGproperit04} $\mathcal{N}(A^{\textcircled{w}_m,W})=\mathcal{N}(((WA)^k)^*WA^{\star m})$;
          \item\label{wmwgproAXit01} $ WAWA^{\textcircled{w}_m,W}=P_{\mathcal{R}\left((WA)^k\right),
       \mathcal{N}\left(((WA)^k)^*(WA)^m\right)}$;
   \item\label{wmwgproXAit02} $A^{\textcircled{w}_m,W}WAW=P_{\mathcal{R}\left((AW)^k\right),
       \mathcal{N}\left(((WA)^k)^*(WA)^{m+1}W\right)}$.
 \end{enumerate}
\end{lemma}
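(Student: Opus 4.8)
The plan is to reduce every assertion to a $2\times2$ block computation via the weighted core-EP decomposition (Lemma~\ref{WcoreEPdecla}). Write
\[
A=U\begin{pmatrix}A_1 & A_2\\ 0 & A_3\end{pmatrix}V^*,\qquad
W=V\begin{pmatrix}W_1 & W_2\\ 0 & W_3\end{pmatrix}U^*,
\]
with $U,V$ unitary, $A_1,W_1\in\mathbb{C}^{t\times t}$ nonsingular, and $A_3W_3$, $W_3A_3$ nilpotent of indices ${\rm Ind}(AW)$, ${\rm Ind}(WA)$. Multiplying out gives
\[
AW=U\begin{pmatrix}A_1W_1 & \ast\\ 0 & A_3W_3\end{pmatrix}U^*,\qquad
WA=V\begin{pmatrix}W_1A_1 & B\\ 0 & W_3A_3\end{pmatrix}V^*,\qquad B:=W_1A_2+W_2A_3,
\]
and, since $k\ge\max\{{\rm Ind}(AW),{\rm Ind}(WA)\}$, the lower-right corners of $(AW)^k$ and $(WA)^k$ vanish, so $\mathcal{R}((AW)^k)$ and $\mathcal{R}((WA)^k)$ are the spans of the first $t$ columns of $U$ and of $V$. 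I would also record at the outset the identities $(A_1W_1)^{-2}A_1=(W_1A_1W_1)^{-1}$, $A_1(W_1A_1)^{-1}=(A_1W_1)^{-1}A_1$, $W_1(A_1W_1)^{-1}=(W_1A_1)^{-1}W_1$, together with the recursion $\Gamma_{j+1}=(W_1A_1)^{j}B+\Gamma_jW_3A_3=B(W_3A_3)^{j}+(W_1A_1)\Gamma_j$ for the $(1,2)$-block $\Gamma_j$ of $(WA)^{j}$; these are the only algebraic facts that will be used.

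The first substantive step is to record the block form of the weighted core-EP inverse, which with respect to this decomposition reads $A^{\textcircled{\dag},W}=U\left(\begin{smallmatrix}(A_1W_1)^{-2}A_1 & 0\\ 0 & 0\end{smallmatrix}\right)V^*$ (as in \cite{FerreyraWeCoerEPinref}). Inserting this into the representation $A^{\textcircled{w}_m,W}=(A^{\textcircled{\dag},W})^{\star(m+1)}WA^{\star m}=(A^{\textcircled{\dag},W}W)^{m}A^{\textcircled{\dag},W}(WA)^{m}$ of item~(1) --- which is also how I would establish (1) itself, should it not already be the definition adopted in \cite{WmWGinGaore} --- and using $(A_1W_1)^{-2}A_1W_1=(A_1W_1)^{-1}$ to collapse the $\star$-powers, one obtains the master representation
\[
A^{\textcircled{w}_m,W}=U\begin{pmatrix}(A_1W_1)^{-2}A_1 & (A_1W_1)^{-(m+2)}A_1\Gamma_m\\ 0 & 0\end{pmatrix}V^*,
\]
which drives everything that follows.

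Items (2)--(4) then come out quickly. With $WAW=V\left(\begin{smallmatrix}W_1A_1W_1 & \ast\\ 0 & W_3A_3W_3\end{smallmatrix}\right)U^*$ and $(A_1W_1)^{-2}A_1=(W_1A_1W_1)^{-1}$ one computes $A^{\textcircled{w}_m,W}(WAW)A^{\textcircled{w}_m,W}=A^{\textcircled{w}_m,W}$, i.e.\ (2); the nonsingular $(1,1)$-block together with the zero bottom row gives $\mathcal{R}(A^{\textcircled{w}_m,W})=\mathcal{R}((AW)^k)$, i.e.\ (3); and since $WA^{\star m}=(WA)^m$, expanding $((WA)^k)^*(WA)^m$ in the $V$-frame and solving the resulting homogeneous system --- invertibility of $((W_1A_1)^k)^*$ discards the lower block of equations --- shows that its null space coincides with the subspace that the master representation exhibits as $\mathcal{N}(A^{\textcircled{w}_m,W})$, i.e.\ (4). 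For (5) and (6) I would multiply the master representation by $WAW$ on the left, resp.\ on the right; the products collapse to $V\left(\begin{smallmatrix}I_t & (W_1A_1)^{-m}\Gamma_m\\ 0 & 0\end{smallmatrix}\right)V^*$ and $U\left(\begin{smallmatrix}I_t & \Theta\\ 0 & 0\end{smallmatrix}\right)U^*$ for an explicit block $\Theta$, each plainly idempotent with range $\mathcal{R}((WA)^k)$, resp.\ $\mathcal{R}((AW)^k)$; the same homogeneous-system argument --- now invoking the $\Gamma$-recursion to express $\Theta$ through $\Gamma_{m+1}$ --- identifies the null spaces with $\mathcal{N}(((WA)^k)^*(WA)^m)$ and $\mathcal{N}(((WA)^k)^*(WA)^{m+1}W)$, which finishes (5) and (6).

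The only real obstacle is bookkeeping. No individual step is conceptually hard, but the off-diagonal blocks --- the $\ast$ in $AW$, the block $B$ and the blocks $\Gamma_j$ that appear in powers of $WA$, the $(1,2)$-block of $WAW$, and the analogue of $\Gamma_m$ carried by $(WA)^{m+1}W$ --- proliferate through every product, and the range and null-space identifications in (4)--(6) fall into place only after the commutation identities above are applied repeatedly so that all the nilpotent-interaction terms align with the $\Gamma$-recursion. Organizing that bookkeeping cleanly, rather than surmounting any genuine difficulty, is where the effort lies.
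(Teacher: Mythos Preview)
The paper does not prove this lemma; it is quoted verbatim from \cite[Theorems~3.3 and~3.4]{WmWGinGaore} and used as a black box throughout. So there is no ``paper's own proof'' to compare against.

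That said, your block-computation approach via the weighted core-EP decomposition is sound and, in fact, is almost certainly the route taken in the cited source: the present paper later quotes (in the proof of Theorem~\ref{WmWGWGMPdecTh}) the canonical form
\[
A^{\textcircled{w}_m,W}=U\begin{pmatrix}(W_1A_1W_1)^{-1} & (A_1W_1)^{-(m+1)}W_1^{-1}B_m\\ 0 & 0\end{pmatrix}V^*
\]
from \cite[Theorem~3.7]{WmWGinGaore}, which is exactly your ``master representation'' once one checks $(A_1W_1)^{-2}A_1=(W_1A_1W_1)^{-1}$ and $(A_1W_1)^{-(m+2)}A_1=(A_1W_1)^{-(m+1)}W_1^{-1}$, and identifies your $\Gamma_m$ with the paper's $B_m$. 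Your derivations of (2)--(6) from this form are correct; in particular, the null-space identifications in (4)--(6) reduce, as you say, to the single linear relation $y=-(W_1A_1)^{-m}\Gamma_m z$ (respectively its $\Gamma_{m+1}$-variant), and the second block of equations coming from $\Gamma_k^*$ is indeed redundant once $((W_1A_1)^k)^*$ is cancelled. The only thing to tidy up is your handling of item~(1): in \cite{WmWGinGaore} the formula $(A^{\textcircled{\dag},W})^{\star(m+1)}WA^{\star m}$ is a theorem, not the definition, so you should state which definition you start from before deriving it.
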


\begin{lemma}\cite[Theorem 5.2]{WeijicoreEPaproref}\label{WeijicoreEPaprolemma}
Let $A\in\mathbb{C}^{q\times n}$, $W(\neq 0)\in\mathbb{C}^{n\times q}$, $b\in\mathbb{C}^{n\times 1}$    and  $k=\max\{{\rm Ind}(AW),{\rm Ind}(WA)\}$.
Then, the constrained system:
 \begin{equation*}
  \mathop {\min }\limits_{\mathcal{R}(X)\subseteq \mathcal{R}((AW)^k)} \Vert WAW-b \Vert_2,
 \end{equation*}
 has the unique solution  $X=A^{\textcircled{\dag},W}b$, where $\Vert \cdot \Vert_2$ is the $2$-norm.
\end{lemma}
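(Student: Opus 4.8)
The plan is to pass to block form via the weighted core-EP decomposition of Lemma~\ref{WcoreEPdecla}, which turns the constrained minimization of $\Vert WAWX-b\Vert_2$ into a nonsingular linear system plus a constant. Write $A$ and $W$ as in \eqref{AWdeceqs}, with $A_1,W_1\in\mathbb{C}^{t\times t}$ nonsingular and $A_3W_3$ nilpotent of index at most $k$. Multiplying out the blocks gives $AW=U\left(\begin{smallmatrix}A_1W_1& \ast\\0& A_3W_3\end{smallmatrix}\right)U^*$ and $WAW=V\left(\begin{smallmatrix}W_1A_1W_1& \ast\\0& W_3A_3W_3\end{smallmatrix}\right)U^*$. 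Since $(A_3W_3)^k=0$ and $(A_1W_1)^k$ is nonsingular, one reads off $(AW)^k=U\left(\begin{smallmatrix}(A_1W_1)^k& \ast\\0& 0\end{smallmatrix}\right)U^*$ and hence $\mathcal{R}((AW)^k)=\bigl\{\,U\left(\begin{smallmatrix}x\\0\end{smallmatrix}\right) : x\in\mathbb{C}^{t\times 1}\,\bigr\}$; thus every feasible $X$ has exactly this shape.

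For such an $X$ I would compute $WAWX=V\left(\begin{smallmatrix}W_1A_1W_1\,x\\0\end{smallmatrix}\right)$. Writing $V^*b=\left(\begin{smallmatrix}b_1\\b_2\end{smallmatrix}\right)$ with $b_1\in\mathbb{C}^{t\times 1}$ and using that $V$ is unitary, $\Vert WAWX-b\Vert_2^2=\Vert W_1A_1W_1\,x-b_1\Vert_2^2+\Vert b_2\Vert_2^2$. Because $W_1A_1W_1$ is nonsingular, the right-hand side attains its minimum if and only if $x=(W_1A_1W_1)^{-1}b_1$, and this $x$ is unique; so the constrained problem has the unique solution $X=U\left(\begin{smallmatrix}(W_1A_1W_1)^{-1}b_1\\0\end{smallmatrix}\right)$.

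It then remains to identify this vector with $A^{\textcircled{\dag},W}b$. In the coordinates of \eqref{AWdeceqs} the weighted core-EP inverse is $A^{\textcircled{\dag},W}=U\left(\begin{smallmatrix}(W_1A_1W_1)^{-1}& 0\\0& 0\end{smallmatrix}\right)V^*$ (its standard representation in the weighted core-EP decomposition, cf.\ \cite{FerreyraWeCoerEPinref}), whence $A^{\textcircled{\dag},W}b=U\left(\begin{smallmatrix}(W_1A_1W_1)^{-1}b_1\\0\end{smallmatrix}\right)$, exactly the minimizer found above. The step that really needs care is justifying this block form — equivalently, the identity $WAW\,A^{\textcircled{\dag},W}=P_{\mathcal{R}((WA)^k)}$, the \emph{orthogonal} projector, which is precisely what makes the cross term in the least-squares expansion vanish; everything else is the routine block bookkeeping above. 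A decomposition-free alternative would be to note that $X=A^{\textcircled{\dag},W}b$ is feasible since $\mathcal{R}(A^{\textcircled{\dag},W})=\mathcal{R}((AW)^k)$, that the residual $WAWX-b=-(I-P_{\mathcal{R}((WA)^k)})b$ is orthogonal to $WAW\,\mathcal{R}((AW)^k)=\mathcal{R}((WA)^k)$, and that $WAW$ is injective on $\mathcal{R}((AW)^k)$; the least-squares orthogonality principle then yields both existence and uniqueness.
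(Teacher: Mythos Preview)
This lemma is not proved in the present paper; it is quoted from \cite[Theorem~5.2]{WeijicoreEPaproref} as a preliminary (the displayed objective carries the obvious typo $\Vert WAW-b\Vert_2$ for $\Vert WAWX-b\Vert_2$, which you read correctly). There is therefore no in-paper argument to compare against.

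Your proof is correct. The block reduction via Lemma~\ref{WcoreEPdecla} is clean, and the one step you flag as needing care --- the representation $A^{\textcircled{\dag},W}=U\bigl(\begin{smallmatrix}(W_1A_1W_1)^{-1}&0\\0&0\end{smallmatrix}\bigr)V^*$ --- follows at once from $A^{\textcircled{\dag},W}=A((WA)^{\textcircled{\dag}})^2$ (\cite[Theorems~2.2 and~2.3]{wcoreepigaore}, recorded later as \eqref{AWcoreEPeqWACepeq}) applied to the core-EP decomposition of $WA$ that \eqref{AWdeceqs} induces: one gets $(WA)^{\textcircled{\dag}}=V\bigl(\begin{smallmatrix}(W_1A_1)^{-1}&0\\0&0\end{smallmatrix}\bigr)V^*$, hence $A((WA)^{\textcircled{\dag}})^2=U\bigl(\begin{smallmatrix}A_1(W_1A_1)^{-2}&0\\0&0\end{smallmatrix}\bigr)V^*$, and $A_1(W_1A_1)^{-2}=(W_1A_1W_1)^{-1}$. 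The decomposition-free variant you sketch (feasibility via $\mathcal{R}(A^{\textcircled{\dag},W})=\mathcal{R}((AW)^k)$, orthogonal residual via $WAW\,A^{\textcircled{\dag},W}=P_{\mathcal{R}((WA)^k)}$, and injectivity of $WAW$ on $\mathcal{R}((AW)^k)$) is equally valid and is essentially the same argument read without coordinates.
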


\begin{lemma}\cite[Theorem 3.2.8]{Wangbookre}\label{ATS2aappwangth}
Let $A\in\mathbb{C}^{q\times n}$ be such that $A^{(2)}_{\mathcal{T},\mathcal{S}}$ exits, where  $\mathcal{T} \subseteq \mathbb{C}^{n} $ and  $\mathcal{S} \subseteq \mathbb{C}^{q}  $ are subspaces. And, suppose that $G\in\mathbb{C}^{n\times q}$ satisfies that
$\mathcal{R}(G)=\mathcal{T}  $ and $\mathcal{N}(G)=\mathcal{S}  $ and that $V,U^*\in\mathbb{C}^{n\times (n-t)}$ satisfy that $\mathcal{R}(V)=\mathcal{N}(GA)$ and $\mathcal{N}(U)=\mathcal{R}(GA)$, where $t={\rm rank}(GA)$. Define $E=V(UV)^{-1}U$. Then,
$\mathcal{R}(E)=\mathcal{N}(GA)$,  $\mathcal{N}(E)=\mathcal{R}(GA)$,
$GA+E$ is nonsingular and $(GA+E)^{-1}=(GA)^{\#}+E^{\#}$.
\end{lemma}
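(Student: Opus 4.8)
The plan is to reduce the whole statement to the single fact that the square matrix $GA\in\mathbb{C}^{n\times n}$ has index at most $1$; once that is in hand, $E$ is recognized as the idempotent complementary to $GA(GA)^{\#}$, and everything else is formal manipulation of two complementary projectors.

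First I would pin down the structure of $GA$. Put $X=A^{(2)}_{\mathcal{T},\mathcal{S}}$. From $XAX=X$ one sees that $XA$ is idempotent, with $\mathcal{R}(XA)=\mathcal{R}(X)=\mathcal{T}$ and $\mathcal{N}(XA)=\{x\in\mathbb{C}^{n}:Ax\in\mathcal{S}\}$, so $\mathbb{C}^{n}=\mathcal{T}\oplus\{x:Ax\in\mathcal{S}\}$; since $\mathcal{N}(A)\subseteq\{x:Ax\in\mathcal{S}\}$, the map $A$ is injective on $\mathcal{T}$. Because $\mathcal{N}(G)=\mathcal{S}$, also $\mathcal{N}(GA)=\{x:Ax\in\mathcal{S}\}$, while $\mathcal{R}(GA)\subseteq\mathcal{R}(G)=\mathcal{T}$ has dimension $n-\dim\mathcal{N}(GA)=\dim\mathcal{T}$, hence $\mathcal{R}(GA)=\mathcal{T}$. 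Thus $t={\rm rank}(GA)=\dim\mathcal{T}$ and $\mathbb{C}^{n}=\mathcal{R}(GA)\oplus\mathcal{N}(GA)$, i.e. ${\rm Ind}(GA)\le 1$; consequently $(GA)^{\#}$ exists, with $\mathcal{R}((GA)^{\#})=\mathcal{R}(GA)$, $\mathcal{N}((GA)^{\#})=\mathcal{N}(GA)$ and $GA(GA)^{\#}=P_{\mathcal{R}(GA),\mathcal{N}(GA)}$.

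Next I would show that $E$ is the complementary projector and then invert $GA+E$. Since $\mathcal{R}(V)=\mathcal{N}(GA)$ and $\mathcal{N}(U)=\mathcal{R}(GA)$ intersect only in $\{0\}$, $UVz=0$ forces $Vz\in\mathcal{R}(GA)\cap\mathcal{N}(GA)=\{0\}$, hence $z=0$; so $UV$ is nonsingular, $E=V(UV)^{-1}U$ is well defined, and $E^{2}=V(UV)^{-1}(UV)(UV)^{-1}U=E$. As $U$ is onto $\mathbb{C}^{n-t}$ and $V(UV)^{-1}$ is injective, $\mathcal{R}(E)=\mathcal{R}(V)=\mathcal{N}(GA)$ and $\mathcal{N}(E)=\mathcal{N}(U)=\mathcal{R}(GA)$, which already gives the first two assertions; moreover $E=P_{\mathcal{N}(GA),\mathcal{R}(GA)}$, so ${\rm Ind}(E)\le 1$ and $E^{\#}=E$. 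Now $\mathcal{R}(E)=\mathcal{N}(GA)$ gives $GAE=0$, $\mathcal{N}(E)=\mathcal{R}(GA)$ gives $EGA=0$, and combining these with $\mathcal{R}((GA)^{\#})=\mathcal{R}(GA)$ and $\mathcal{R}(E^{\#})=\mathcal{R}(E)$ also $GAE^{\#}=0$ and $E(GA)^{\#}=0$; hence
\[
(GA+E)\big((GA)^{\#}+E^{\#}\big)=GA(GA)^{\#}+EE^{\#}=P_{\mathcal{R}(GA),\mathcal{N}(GA)}+P_{\mathcal{N}(GA),\mathcal{R}(GA)}=I_{n},
\]
the two projectors being complementary. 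Since $GA+E$ and $(GA)^{\#}+E^{\#}$ are $n\times n$, $GA+E$ is nonsingular with inverse $(GA)^{\#}+E^{\#}$, as claimed.

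The step I expect to be the real obstacle is the first one: the later manipulations are routine once the decomposition is in place, but one genuinely must use the hypothesis that $A^{(2)}_{\mathcal{T},\mathcal{S}}$ exists to secure $\mathbb{C}^{n}=\mathcal{R}(GA)\oplus\mathcal{N}(GA)$, which is precisely what makes the symbol $(GA)^{\#}$ meaningful. An equivalent way to carry out the first step is to invoke the classical criterion that $A^{(2)}_{\mathcal{T},\mathcal{S}}$ exists if and only if $A\mathcal{T}\oplus\mathcal{S}=\mathbb{C}^{q}$ and then recover the decomposition by a rank count via the modular law.
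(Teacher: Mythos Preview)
Your argument is correct. The paper, however, does not prove this lemma at all: it is quoted verbatim as \cite[Theorem~3.2.8]{Wangbookre} and used as a black box in Lemma~\ref{UVEranginvth}, so there is no ``paper's own proof'' to compare against. What you have supplied is a clean self-contained proof of the cited result, and each step checks out: the existence of $A^{(2)}_{\mathcal{T},\mathcal{S}}$ forces $\mathcal{R}(GA)=\mathcal{T}$ and $\mathcal{N}(GA)=\{x:Ax\in\mathcal{S}\}$ to be complementary, so ${\rm Ind}(GA)\le 1$ and $(GA)^{\#}$ exists; the invertibility of $UV$ follows from $\mathcal{R}(GA)\cap\mathcal{N}(GA)=\{0\}$ together with the full column rank of $V$; and the identification $E=P_{\mathcal{N}(GA),\mathcal{R}(GA)}$, $E^{\#}=E$ makes the final inverse computation a one-line check of complementary idempotents summing to $I_n$. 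Your remark that the index-$1$ step is the only substantive one is accurate; the rest is indeed formal.
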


\section{$W$-weighted $m$-weak group MP inverse}\label{weightedsection2}
In  order to extend the definition of  the $m$-weak group MP inverse to rectangular matrices, we first consider the  following system of matrix equations:
\begin{equation}\label{wmWGMPorteqs}
X=WA^{D,W}WAX,~AX=AWA^{\textcircled{w}_m,W}WAA^{\dag},
\end{equation}
where $A\in\mathbb{C}^{q\times n}$, $W(\neq 0)\in\mathbb{C}^{n\times q}$ and $m\in\mathbb{Z}^{+}$.

\begin{theorem}\label{origdenWWMMPinth}
The system \eqref{wmWGMPorteqs} has the unique solution $X=WA^{\textcircled{w}_m,W}WAA^{\dag}$.
\end{theorem}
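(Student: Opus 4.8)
My plan is to verify existence of the claimed solution by direct substitution and then to pin it down by a range argument; the only external input I need is the standard identity $WA^{D,W}=(WA)^{D}$ for the $W$-weighted Drazin inverse (equivalently $A^{D,W}=A\big((WA)^{D}\big)^{2}$), which makes $WA^{D,W}WA=(WA)^{D}(WA)$ the spectral idempotent of $WA$: it has $\mathcal{R}\big((WA)^{D}(WA)\big)=\mathcal{R}\big((WA)^{k}\big)$, $\mathcal{N}\big((WA)^{D}(WA)\big)=\mathcal{N}\big((WA)^{k}\big)$, and it restricts to the identity on $\mathcal{R}\big((WA)^{k}\big)$, where $k=\max\{{\rm Ind}(AW),{\rm Ind}(WA)\}$. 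Write $X_{0}=WA^{\textcircled{w}_m,W}WAA^{\dag}$.

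For existence, I would first note that the second equation in \eqref{wmWGMPorteqs} holds for $X_{0}$ by inspection, since $AX_{0}=AWA^{\textcircled{w}_m,W}WAA^{\dag}$ is exactly the prescribed right-hand side. For the first equation, since $(WA)^{D}(WA)$ is the identity on $\mathcal{R}\big((WA)^{k}\big)$, it is enough to check $\mathcal{R}(X_{0})\subseteq\mathcal{R}\big((WA)^{k}\big)$. By Lemma \ref{WmWGproperth} one has $\mathcal{R}(A^{\textcircled{w}_m,W})=\mathcal{R}\big((AW)^{k}\big)$, hence $A^{\textcircled{w}_m,W}=(AW)^{k}M$ for some $M$, and therefore
\[
\mathcal{R}(X_{0})\subseteq\mathcal{R}\big(WA^{\textcircled{w}_m,W}\big)\subseteq\mathcal{R}\big(W(AW)^{k}\big)=\mathcal{R}\big((WA)^{k}W\big)\subseteq\mathcal{R}\big((WA)^{k}\big);
\]
this gives $WA^{D,W}WAX_{0}=X_{0}$ and finishes existence.

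For uniqueness, I would take an arbitrary solution $X$ of \eqref{wmWGMPorteqs} and set $Y=X-X_{0}$. The first equation gives $X=WA^{D,W}WAX=(WA)^{D}(WA)X$, so $\mathcal{R}(X)\subseteq\mathcal{R}\big((WA)^{k}\big)$, and with the inclusion above $\mathcal{R}(Y)\subseteq\mathcal{R}\big((WA)^{k}\big)$. Subtracting the second equation written for $X$ from the one written for $X_{0}$ yields $AY=0$, so $\mathcal{R}(Y)\subseteq\mathcal{N}(A)\subseteq\mathcal{N}(WA)\subseteq\mathcal{N}\big((WA)^{k}\big)$. Since $\mathbb{C}^{n\times1}=\mathcal{R}\big((WA)^{k}\big)\oplus\mathcal{N}\big((WA)^{k}\big)$, we conclude $\mathcal{R}(Y)=\{0\}$, that is $X=X_{0}$.

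I do not expect any step here to be a serious obstacle; the one place that needs care is recording the facts about $WA^{D,W}WA$ (that it is the spectral idempotent of $WA$ with the stated range and null space) precisely. If one prefers to avoid even that, an alternative is to substitute the weighted core-EP decomposition \eqref{AWdeceqs} into \eqref{wmWGMPorteqs}, compute the resulting $2\times2$ block forms of $A^{D,W}$, $A^{\textcircled{\dag},W}$, $A^{\textcircled{w}_m,W}$ and $A^{\dag}$, and reduce both assertions to block-matrix identities; in that route the main nuisance is keeping track of the off-diagonal blocks $A_{2},W_{2}$ and of $A^{\dag}$, which does not respect the block partition, so I would favor the subspace argument above.
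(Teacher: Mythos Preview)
Your proof is correct and relies on the same ingredients as the paper: the identity $WA^{D,W}=(WA)^{D}$ (equivalently $A^{D,W}=A((WA)^{D})^{2}$), the spectral idempotent $(WA)^{D}(WA)$, and Lemma~\ref{WmWGproperth}\eqref{WmWGproperit03}. The only difference is organizational --- the paper folds existence and uniqueness into a single substitution chain (insert the second equation of \eqref{wmWGMPorteqs} into the first and simplify via $(WA)^{D}(WA)\,WA^{\textcircled{w}_m,W}=WA^{\textcircled{w}_m,W}$), whereas you separate the two and handle uniqueness through the direct sum $\mathcal{R}\big((WA)^{k}\big)\oplus\mathcal{N}\big((WA)^{k}\big)=\mathbb{C}^{n\times 1}$.
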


\begin{proof}
Using  the fact that
\begin{equation}\label{BBDProeqeq}
 BB^{D}=P_{\mathcal{R}(B^d),\mathcal{N}(B^d)}
\end{equation}
 for $B\in\mathbb{C}^{n\times n}$ and $d\geq {\rm Ind}(B)$, and   \cite[Corollary 2.1]{wdrazinclinere}, i.e.,
 \begin{equation}\label{Wdrazinproit01}
   A^{D,W}=A((WA)^D)^2,
 \end{equation}
by   Lemma \ref{WmWGproperth}\eqref{WmWGproperit03} we have that
\begin{align*}
  X&=WA^{D,W}WAX=WA^{D,W}WAWA^{\textcircled{w}_m,W}WAA^{\dag}
  \\  &= WA((WA)^D)^2WAWA^{\textcircled{w}_m,W}WAA^{\dag}=
   (WA)^DWAWA^{\textcircled{w}_m,W}WAA^{\dag}\\
  &=  WA^{\textcircled{w}_m,W}WAA^{\dag},
\end{align*}
which completes the proof.
\end{proof}

\begin{definition}
Let $A\in\mathbb{C}^{q\times n}$, $W(\neq 0)\in\mathbb{C}^{n\times q}$ and $m\in\mathbb{Z}^{+}$.   The $W$-weighted weak group MP inverse (in short, $W$-$m$-WGMP inverse), denoted by $  A^{\textcircled{w}_m,W,\dag}$,  is defined as
\begin{equation}\label{WmWGMPdeneq}
  A^{\textcircled{w}_m,W,\dag}=WA^{\textcircled{w}_m,W}WAA^{\dag}.
\end{equation}
\end{definition}

\begin{remark}
Note that Gao et al.  in \cite[Corollary 3.8]{WmWGinGaore} have  proved that
$   A^{\textcircled{w}_m,W}=   A^{\textcircled{w},W}$  if $m=1$, $  A^{\textcircled{w}_m,W}=   A^{D,W}$ if $m\geq k$, and $  A^{\textcircled{w}_m,W}=  A^{\textcircled{w}_m}$ if $W=I_n$. And, in view of the definitions of the $W$-weighted weak core inverse \cite{WWCoreDijre}, $W$-weighted DMP inverse \cite{MengWWDMPinvref},  and $m$-weak  group MP inverse \cite{Jiangmwgmpiref},  i.e., $A^{\textcircled{w},W,\dag}=WA^{\textcircled{w},W}WAA^{\dag}$,
 $A^{D,\dag}_w=WA^{D,W}WAA^{\dag}$ and $A^{\textcircled{w}_m,\dag}=A^{\textcircled{w}_m}AA^{\dag}$,  by \eqref{WmWGMPdeneq}
it is clear   that $   A^{\textcircled{w}_m,W,\dag}=   A^{\textcircled{w},W,\dag}$  if $m=1$,  $  A^{\textcircled{w}_m,W}=   A^{D,\dag}_{w}$ if $m\geq k$,  and $  A^{\textcircled{w}_m,W}=   A^{\textcircled{w}_m}$ if $W=I_n$.
\end{remark}

We now   present some fundamental properties of the $W$-$m$-WGMP inverse  in order to facilitate further research.

 \begin{theorem}\label{WmWGWGMPproperth}
Let $A\in\mathbb{C}^{q\times n}$, $W(\neq 0)\in\mathbb{C}^{n\times q}$, $m\in\mathbb{Z}^{+}$ and $k=\max\{{\rm Ind}(AW),{\rm Ind}(WA)\}$. Then:
 \begin{enumerate}[$(1)$]
   \item\label{WmWGWGMPproperit01} $   A^{\textcircled{w}_m,W,\dag}\in A\{2\}$;
   \item\label{WmWGWGMPproperit02} ${\rm rank}(A^{\textcircled{w}_m,W,\dag})={\rm rank}{(WA^{D,W})}={\rm rank}{((WA)^{k})}$;
   \item\label{WmWGWGMPproperit03} $\mathcal{R}(A^{\textcircled{w}_m,W,\dag})=
       \mathcal{R}(WA^{D,W})= \mathcal{R}((WA)^{k})$;
   \item\label{WmWGWGMPproperit04} $\mathcal{N}(A^{\textcircled{w}_m,W,\dag})=
   \mathcal{N}(A^{\textcircled{\dag},W}(WA)^{m+1}A^{\dag})=
   \mathcal{N}(((WA)^k)^*(WA)^{m+1}A^{\dag})$;
   \item\label{WmWGWGMPproperit05} $A^{\textcircled{w}_m,W,\dag}=
       A^{(2)}_{\mathcal{R}(WA^{D,W}),\mathcal{N}(A^{\textcircled{\dag},W}(WA)^{m+1}A^{\dag})}=
       A^{(2)}_{\mathcal{R}((WA)^{k}),\mathcal{N}(((WA)^k)^*(WA)^{m+1}A^{\dag})}$.
 \end{enumerate}
\end{theorem}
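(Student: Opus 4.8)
Write $X:=A^{\textcircled{w}_m,W,\dag}=WA^{\textcircled{w}_m,W}WAA^{\dag}$ and recall, from \eqref{wmWGMPorteqs} and Theorem \ref{origdenWWMMPinth}, the two identities $X=WA^{D,W}WAX$ and $AX=AWA^{\textcircled{w}_m,W}WAA^{\dag}$. The plan is to prove the five items in the order $(1),(3),(2),(4),(5)$, relying on Lemma \ref{WmWGproperth}, the identity $WA^{D,W}=(WA)^{D}$ (which follows from \eqref{Wdrazinproit01} and the commutativity of $WA$ with $(WA)^{D}$), and the repeated use of $AA^{\dag}A=A$. For $(1)$: $AA^{\dag}A=A$ gives $XA=WA^{\textcircled{w}_m,W}WA$, hence $XAX=(XA)X=W\bigl(A^{\textcircled{w}_m,W}(WAW)A^{\textcircled{w}_m,W}\bigr)WAA^{\dag}=WA^{\textcircled{w}_m,W}WAA^{\dag}=X$, using $A^{\textcircled{w}_m,W}\in(WAW)\{2\}$, i.e.\ Lemma \ref{WmWGproperth}\eqref{WmWGproperit01}.

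For $(3)$: from $WA^{D,W}=(WA)^{D}$ and $k\ge{\rm Ind}(WA)$ we get $\mathcal{R}(WA^{D,W})=\mathcal{R}((WA)^{D})=\mathcal{R}((WA)^{k})$, and the inclusion $\mathcal{R}(X)\subseteq\mathcal{R}((WA)^{k})$ follows from $X=WA^{D,W}WAX=(WA)^{D}(WAX)$. For the reverse inclusion, $AA^{\dag}A=A$ again gives $XA=WA^{\textcircled{w}_m,W}WA$, so $XAW=WA^{\textcircled{w}_m,W}WAW$; by Lemma \ref{WmWGproperth}\eqref{wmwgproXAit02} the matrix $A^{\textcircled{w}_m,W}WAW$ is a projector with range $\mathcal{R}((AW)^{k})$, so $\mathcal{R}(XAW)=\mathcal{R}\bigl(W(AW)^{k}\bigr)=\mathcal{R}\bigl((WA)^{k}W\bigr)=\mathcal{R}((WA)^{k})$, the last equality because $\mathcal{R}((WA)^{k})=\mathcal{R}((WA)^{k+1})\subseteq\mathcal{R}((WA)^{k}W)\subseteq\mathcal{R}((WA)^{k})$. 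Since $\mathcal{R}(XAW)\subseteq\mathcal{R}(X)$, all three spaces coincide, which is $(3)$; passing to dimensions yields $(2)$.

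For $(4)$: the key reduction is $\mathcal{N}(X)=\mathcal{N}(WAX)$. One inclusion is trivial, and if $WAXv=0$ then $Xv\in\mathcal{R}(X)\cap\mathcal{N}(WA)=\mathcal{R}((WA)^{k})\cap\mathcal{N}(WA)=\{0\}$ since $k\ge{\rm Ind}(WA)$, giving the other. Now $WAX=\bigl(WAWA^{\textcircled{w}_m,W}\bigr)WAA^{\dag}$, and Lemma \ref{WmWGproperth}\eqref{wmwgproAXit01} identifies $WAWA^{\textcircled{w}_m,W}$ as the projector whose null space is $\mathcal{N}(((WA)^{k})^{*}(WA)^{m})$; hence $\mathcal{N}(X)=\{v:\ WAA^{\dag}v\in\mathcal{N}(((WA)^{k})^{*}(WA)^{m})\}=\mathcal{N}(((WA)^{k})^{*}(WA)^{m+1}A^{\dag})$, the second displayed description. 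For the first, I would invoke the known characterisation $\mathcal{N}(A^{\textcircled{\dag},W})=\mathcal{N}(((WA)^{k})^{*})=\mathcal{R}((WA)^{k})^{\perp}$ of the weighted core-EP inverse (see \cite{FerreyraWeCoerEPinref}), which yields $\mathcal{N}(A^{\textcircled{\dag},W}Z)=\mathcal{N}(((WA)^{k})^{*}Z)$ for $Z=(WA)^{m+1}A^{\dag}$. Finally, $(5)$ is immediate because a $\{2\}$-inverse is uniquely determined by its range and null space, so $(1)$, $(3)$, $(4)$ together give $X=A^{(2)}_{\mathcal{R}(WA^{D,W}),\,\mathcal{N}(A^{\textcircled{\dag},W}(WA)^{m+1}A^{\dag})}=A^{(2)}_{\mathcal{R}((WA)^{k}),\,\mathcal{N}(((WA)^{k})^{*}(WA)^{m+1}A^{\dag})}$.

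The step I expect to be the main obstacle is reconciling the two descriptions of $\mathcal{N}(X)$ in $(4)$: the passage through $\mathcal{N}(WAX)$ and Lemma \ref{WmWGproperth}\eqref{wmwgproAXit01} yields the $((WA)^{k})^{*}$-form without effort, but matching it to the $A^{\textcircled{\dag},W}$-form rests entirely on $\mathcal{N}(A^{\textcircled{\dag},W})=\mathcal{R}((WA)^{k})^{\perp}$, so one should check that this is available in precisely that form or otherwise re-derive it from the weighted core-EP decomposition of Lemma \ref{WcoreEPdecla}. A lesser technical point is the bookkeeping with the $\star$-powers, namely $WA^{\star m}=(WA)^{m}$ and $W(A^{\textcircled{\dag},W})^{\star(m+1)}=(WA^{\textcircled{\dag},W})^{m+1}$, which would be needed for an alternative, decomposition-free route to the first form of $(4)$ starting from $X=(WA^{\textcircled{\dag},W})^{m+1}(WA)^{m+1}A^{\dag}$, obtained by substituting Lemma \ref{WmWGproperth}\eqref{WmWGproperit00} into \eqref{WmWGMPdeneq}.
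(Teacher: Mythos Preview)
Your proof is correct and follows essentially the same route as the paper's: both rely on Lemma \ref{WmWGproperth}, the identity $AA^{\dag}A=A$, and the fact $\mathcal{N}(A^{\textcircled{\dag},W})=\mathcal{N}(((WA)^{k})^{*})$ (which the paper cites from \cite[Proposition 3.1]{wcoreepigaore} rather than \cite{FerreyraWeCoerEPinref}). The only cosmetic differences are your order $(1),(3),(2),(4),(5)$ versus the paper's $(1),(2),(3),(4),(5)$, and your use of the intersection $\mathcal{R}((WA)^{k})\cap\mathcal{N}(WA)=\{0\}$ to obtain $\mathcal{N}(X)=\mathcal{N}(WAX)$ in item $(4)$, in place of the paper's rank-nullity argument establishing $\mathcal{N}(WA^{\textcircled{w}_m,W})=\mathcal{N}(A^{\textcircled{w}_m,W})$ followed by an orthogonal-complement computation.
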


 \begin{proof}
\eqref{WmWGWGMPproperit01}  Using \eqref{WmWGMPdeneq} and Lemma \ref{WmWGproperth}\eqref{WmWGproperit01}, we have that
\begin{align*}
   A^{\textcircled{w}_m,W,\dag}A  A^{\textcircled{w}_m,W,\dag}&=
   WA^{\textcircled{w}_m,W}WAA^{\dag}AWA^{\textcircled{w}_m,W}WAA^{\dag}\\
   &=    WA^{\textcircled{w}_m,W}WAWA^{\textcircled{w}_m,W}WAA^{\dag} =
    WA^{\textcircled{w}_m,W}WAA^{\dag}=A^{\textcircled{w}_m,W,\dag}.
\end{align*}
\par
\eqref{WmWGWGMPproperit02} According to \eqref{wmWGMPorteqs} and Lemma \ref{WmWGproperth}\eqref{wmwgproXAit02},
it follows that
\begin{align*}
 A   A^{\textcircled{w}_m,W,\dag}(AW)^{k+1}&=
 AWA^{\textcircled{w}_m,W}WAA^{\dag}(AW)^{k+1}\\&=
  AWA^{\textcircled{w}_m,W}WAW(AW)^{k }=
       (AW)^{k+1},
\end{align*}
which implies that
 $${\rm rank}{(W(AW)^{k})}\leq{\rm rank}{((AW)^{k})}=  {\rm rank}{((AW)^{k+1})}\leq {\rm rank}(A^{\textcircled{w}_m,W,\dag}).$$
Moreover, by \eqref{wmWGMPorteqs} and the fact that \cite[Lemma 2]{wdraziWeire}, i.e,
\begin{equation}\label{Wdrazinproit03}
  \mathcal{R}(A^{D,W})=\mathcal{R}((AW)^k),
\end{equation}
  it is obvious that
\begin{equation*}
 {\rm rank}(A^{\textcircled{w}_m,W,\dag}) \leq
  {\rm rank}{(WA^{D,W})}={\rm rank}{(W(AW)^{k})}.
\end{equation*}
Thus, $ {\rm rank}(A^{\textcircled{w}_m,W,\dag})={\rm rank}{(W(AW)^{k})}={\rm rank}{(WA^{D,W})}$. Note that
\begin{equation}\label{WAWkWAkraneq}
 {\rm rank}{(W(AW)^{k})} \leq {\rm rank}{((WA)^{k})}
 ={\rm rank}{((WA)^{k+1})}  \leq {\rm rank}{(W(AW)^{k})}.
\end{equation}
Hence, $ {\rm rank}(A^{\textcircled{w}_m,W,\dag})={\rm rank}{(W(AW)^{k})}={\rm rank}{((WA)^{k})} $.
\par
\eqref{WmWGWGMPproperit03}
Since $ \mathcal{R}(A^{\textcircled{w}_m,W,\dag}) \subseteq
       \mathcal{R}(WA^{D,W})$ follows from \eqref{wmWGMPorteqs},
using  the item    \eqref{WmWGWGMPproperit02},
\eqref{Wdrazinproit03} and  \eqref{WAWkWAkraneq}
 we deduce that
 \begin{equation}\label{WADrwakreqeq}
  \mathcal{R}(A^{\textcircled{w}_m,W,\dag}) =
       \mathcal{R}(WA^{D,W})=\mathcal{R}(W(AW)^{k})=\mathcal{R}((WA)^{k}).
\end{equation}
\par
\eqref{WmWGWGMPproperit04}
Note that
\begin{equation*}
 {\rm rank}{(W(AW)^{k})} \leq {\rm rank}{((AW)^{k})}
 ={\rm rank}{((AW)^{k+1})}  \leq {\rm rank}{(W(AW)^{k})},
\end{equation*}
implying that $ {\rm rank}{(W(AW)^{k})}={\rm rank}{((AW)^{k})}$. Then, by items \eqref{WmWGproperit03} and \eqref{WmWGproperit04} in Lemma \ref{WmWGproperth},      we have that
\begin{equation}\label{nwwmgqeqeq}
\mathcal{N}(WA^{\textcircled{w}_m,W})=
\mathcal{N}(A^{\textcircled{w}_m,W})=
\mathcal{N}(((WA)^k)^*WA^{\star m}).
\end{equation}
Thus, in terms of \eqref{WmWGMPdeneq}, we obtain that
\begin{align*}
 \mathcal{N}(A^{\textcircled{w}_m,W,\dag})&=
 \mathcal{N}(WA^{\textcircled{w}_m,W}WAA^{\dag})=
\left((WAA^{\dag})^*\mathcal{N}^{\perp}
(WA^{\textcircled{w}_m,W})\right)^{\perp}\\
&= \left((WAA^{\dag})^*\mathcal{N}^{\perp}(((WA)^k)^*WA^{\star m})\right)^{\perp}=\mathcal{R}^{\perp}\left((WAA^{\dag})^*(WA^{\star m})^*(WA)^k\right)\\
&= \mathcal{N}\left(((WA)^k)^*WA^{\star m}WAA^{\dag}\right)
=
   \mathcal{N}\left(((WA)^k)^*(WA)^{m+1}A^{\dag}\right).
\end{align*}
Similarly, by \cite[Proposition 3.1]{wcoreepigaore},
i.e.,    $\mathcal{N}(A^{\textcircled{\dag},W})=\mathcal{N}\left(((WA)^k)^*\right)$,  it follows that
$$\mathcal{N}\left(A^{\textcircled{\dag},W}(WA)^{m+1}A^{\dag}\right)=
 \mathcal{R}^{\perp}\left(((WA)^{m+1}A^{\dag})^*(WA)^k\right)=
   \mathcal{N}\left(((WA)^k)^*(WA)^{m+1}A^{\dag}\right).$$
 Hence, the item $(4)$ holds.
 \par
  \eqref{WmWGWGMPproperit05} It can be  directly obtained by items \eqref{WmWGWGMPproperit01},  \eqref{WmWGWGMPproperit03} and  \eqref{WmWGWGMPproperit04}.
\end{proof}

\begin{remark}
By comparing Theorem \ref{WmWGWGMPproperth}\eqref{WmWGWGMPproperit05} and \cite[Corollary 2.5]{WWCoreDijre}, i.e.,
$$A^{\textcircled{w},W,\dag}= A^{(2)}_{\mathcal{R}(WA^{D,W}),\mathcal{N}(A^{\textcircled{\dag},W}(WA)^2A^{\dag})}, $$
it is clear that $A^{\textcircled{w}_m,W,\dag}=A^{\textcircled{w},W,\dag}$ when $m=1$,
 and another outer inverse representation of the $W$-weighted weak core inverse is given as follows:
 $$A^{\textcircled{w},W,\dag}=
 A^{(2)}_{\mathcal{R}((WA)^{k}),\mathcal{N}(((WA)^k)^*(WA)^{2}A^{\dag})}.$$
\end{remark}

 \begin{theorem}\label{wwgwgmpoperTh}
Let $A\in\mathbb{C}^{q\times n}$, $W(\neq 0)\in\mathbb{C}^{n\times q}$, $m\in\mathbb{Z}^{+}$ and $k=\max\{{\rm Ind}(AW),{\rm Ind}(WA)\}$. Then:
 \begin{enumerate}[$(1)$]
   \item\label{wwgwgmpoperitem01} $  A A^{\textcircled{w}_m,W,\dag}=
       P_{\mathcal{R}\left((AW)^k\right),
       \mathcal{N}\left(((WA)^k)^*(WA)^{m+1}A^{\dag}\right)}$;
   \item\label{wwgwgmpoperitem02} $ A^{\textcircled{w}_m,W,\dag}A=P_{\mathcal{R}\left((WA)^k\right),
       \mathcal{N}\left(((WA)^k)^*(WA)^{m+1}\right)}$.
 \end{enumerate}
\end{theorem}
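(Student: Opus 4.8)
The plan is to compute the two idempotents $AA^{\textcircled{w}_m,W,\dag}$ and $A^{\textcircled{w}_m,W,\dag}A$ directly from the definition \eqref{WmWGMPdeneq}, identify their range and null space, and invoke the standard fact that an idempotent $P$ with $\mathcal{R}(P)=\mathcal{T}$ and $\mathcal{N}(P)=\mathcal{S}$ equals $P_{\mathcal{T},\mathcal{S}}$. For part \eqref{wwgwgmpoperitem01}, I would first check idempotency of $AA^{\textcircled{w}_m,W,\dag}$: since $A^{\textcircled{w}_m,W,\dag}\in A\{2\}$ by Theorem \ref{WmWGWGMPproperth}\eqref{WmWGWGMPproperit01}, we get $(AA^{\textcircled{w}_m,W,\dag})^2 = A(A^{\textcircled{w}_m,W,\dag}AA^{\textcircled{w}_m,W,\dag}) = AA^{\textcircled{w}_m,W,\dag}$. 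Then $\mathcal{R}(AA^{\textcircled{w}_m,W,\dag}) = \mathcal{R}(A A^{\textcircled{w}_m,W,\dag}A\cdot(\text{something}))$; more cleanly, from \eqref{wmWGMPorteqs} we have $AX = AWA^{\textcircled{w}_m,W}WAA^{\dag}$, and using Lemma \ref{WmWGproperth}\eqref{wmwgproAXit02} together with $\mathcal{R}(A^{\textcircled{w}_m,W})=\mathcal{R}((AW)^k)$ I expect $\mathcal{R}(AA^{\textcircled{w}_m,W,\dag})=\mathcal{R}(AW(AW)^{k-1}\cdots)=\mathcal{R}((AW)^k)$, the right containment following from the rank identity in Theorem \ref{WmWGWGMPproperth}\eqref{WmWGWGMPproperit02} (ranks are equal, so inclusion forces equality). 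The null space is handled by noting $AA^{\textcircled{w}_m,W,\dag}$ and $A^{\textcircled{w}_m,W,\dag}$ have the same null space (since $A^{\textcircled{w}_m,W,\dag}=A^{\textcircled{w}_m,W,\dag}AA^{\textcircled{w}_m,W,\dag}$ forces $\mathcal{N}(A^{\textcircled{w}_m,W,\dag})\subseteq\mathcal{N}(AA^{\textcircled{w}_m,W,\dag})$, and the reverse is trivial from $A^{\textcircled{w}_m,W,\dag}=WA^{\textcircled{w}_m,W}WAA^{\dag}$ ... actually the reverse needs $\mathcal{N}(AA^{\textcircled{w}_m,W,\dag})\subseteq\mathcal{N}(A^{\textcircled{w}_m,W,\dag})$, which follows from $A^{\textcircled{w}_m,W,\dag}=A^{\textcircled{w}_m,W,\dag}A\cdot A^{\textcircled{w}_m,W,\dag}$ — wait, that gives the other direction; instead use that $AA^{\textcircled{w}_m,W,\dag}$ is idempotent so $\mathcal{N}(AA^{\textcircled{w}_m,W,\dag})=\mathcal{R}(I-AA^{\textcircled{w}_m,W,\dag})$ and combine with the projector structure). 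Then $\mathcal{N}(AA^{\textcircled{w}_m,W,\dag})=\mathcal{N}(A^{\textcircled{w}_m,W,\dag})=\mathcal{N}(((WA)^k)^*(WA)^{m+1}A^{\dag})$ by Theorem \ref{WmWGWGMPproperth}\eqref{WmWGWGMPproperit04}.

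For part \eqref{wwgwgmpoperitem02}, idempotency of $A^{\textcircled{w}_m,W,\dag}A$ again follows from the outer-inverse property. The range: $\mathcal{R}(A^{\textcircled{w}_m,W,\dag}A)\subseteq\mathcal{R}(A^{\textcircled{w}_m,W,\dag})=\mathcal{R}((WA)^k)$ by Theorem \ref{WmWGWGMPproperth}\eqref{WmWGWGMPproperit03}, and the reverse inclusion follows from a rank count: $\mathrm{rank}(A^{\textcircled{w}_m,W,\dag}A)\ge\mathrm{rank}(AA^{\textcircled{w}_m,W,\dag}A)=\mathrm{rank}(A^{\textcircled{w}_m,W,\dag})$ (since $A^{\textcircled{w}_m,W,\dag}$ is an outer inverse of $A$, $A^{\textcircled{w}_m,W,\dag}AA^{\textcircled{w}_m,W,\dag}=A^{\textcircled{w}_m,W,\dag}$ gives $\mathrm{rank}(A^{\textcircled{w}_m,W,\dag}A)=\mathrm{rank}(A^{\textcircled{w}_m,W,\dag})$ directly), so equality of ranks plus inclusion gives $\mathcal{R}(A^{\textcircled{w}_m,W,\dag}A)=\mathcal{R}((WA)^k)$. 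The null space is the delicate computation: I would start from $A^{\textcircled{w}_m,W,\dag}A = WA^{\textcircled{w}_m,W}WAA^{\dag}A$, observe that $\mathcal{N}(A^{\textcircled{w}_m,W,\dag}A)\supseteq\mathcal{N}(A)$ is false in general — rather, use that $x\in\mathcal{N}(A^{\textcircled{w}_m,W,\dag}A)$ iff $Ax\in\mathcal{N}(WA^{\textcircled{w}_m,W}W\cdot(\cdot))$... Instead I would mimic the null-space argument in the proof of Theorem \ref{WmWGWGMPproperth}\eqref{WmWGWGMPproperit04}: write $\mathcal{N}(A^{\textcircled{w}_m,W,\dag}A) = \mathcal{N}(WA^{\textcircled{w}_m,W}WAA^{\dag}A)$, pass to orthogonal complements of ranges, and use $\mathcal{N}(WA^{\textcircled{w}_m,W})=\mathcal{N}(((WA)^k)^*WA^{\star m})$ from \eqref{nwwmgqeqeq}, together with $A^{\dag}A=(A^\dag A)^*$ and $WAA^\dag A=WA$, to collapse $((WA)^k)^*WA^{\star m}WAA^{\dag}A=((WA)^k)^*(WA)^{m+1}$.

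The main obstacle will be the null-space identification in part \eqref{wwgwgmpoperitem02}, where the simplification $WAA^\dag A = WA$ must be used to absorb the Moore–Penrose factor and reach the clean form $((WA)^k)^*(WA)^{m+1}$; one must be careful that the outer-orthogonal-complement manipulation $\mathcal{N}(MN)=(N^*\mathcal{N}^\perp(M))^\perp$ requires tracking ranges correctly (this is exactly the technique already used in the proof of Theorem \ref{WmWGWGMPproperth}\eqref{WmWGWGMPproperit04}, so the pattern is available). Once both idempotents are shown to have the stated range and null space, the result is immediate from the uniqueness of the oblique projector $P_{\mathcal{T},\mathcal{S}}$. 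A cleaner alternative I would also consider: derive \eqref{wwgwgmpoperitem01} and \eqref{wwgwgmpoperitem02} directly by substituting $A^{\textcircled{w}_m,W,\dag}=WA^{\textcircled{w}_m,W}WAA^\dag$ and $A^{\textcircled{w}_m,W}=(A^{\textcircled{\dag},W})^{\star(m+1)}WA^{\star m}$ from Lemma \ref{WmWGproperth}\eqref{WmWGproperit00}, then reduce $AA^{\textcircled{w}_m,W,\dag}=AWA^{\textcircled{w}_m,W}WAA^\dag$ to $WAWA^{\textcircled{w}_m,W}WA$ conjugated appropriately and quote Lemma \ref{WmWGproperth}\eqref{wmwgproAXit02} almost verbatim — this shortcut likely gives \eqref{wwgwgmpoperitem01} in two lines and \eqref{wwgwgmpoperitem02} in three.
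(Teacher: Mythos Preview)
Your proposal is correct and follows essentially the same route as the paper: both parts use the outer-inverse property (Theorem~\ref{WmWGWGMPproperth}\eqref{WmWGWGMPproperit01}) for idempotency, Theorem~\ref{WmWGWGMPproperth}\eqref{WmWGWGMPproperit03}--\eqref{WmWGWGMPproperit04} for the range and null-space identifications, and for part~\eqref{wwgwgmpoperitem02} the paper carries out exactly the null-space computation you describe --- simplify $WAA^{\dag}A=WA$, then apply the orthogonal-complement identity $\mathcal{N}(MN)=(N^*\mathcal{N}^{\perp}(M))^{\perp}$ together with \eqref{nwwmgqeqeq} to obtain $\mathcal{N}\bigl(((WA)^k)^*(WA)^{m+1}\bigr)$. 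The only cosmetic difference is that for the range in part~\eqref{wwgwgmpoperitem01} the paper uses $\mathcal{R}(A^{\textcircled{w}_m,W,\dag})=\mathcal{R}(WA^{D,W})$ and \eqref{Wdrazinproit03} to get $\mathcal{R}(AA^{\textcircled{w}_m,W,\dag})=\mathcal{R}(AWA^{D,W})=\mathcal{R}((AW)^{k+1})=\mathcal{R}((AW)^k)$ directly, rather than your rank-count-plus-inclusion argument; your self-corrections on the direction of the null-space inclusions are unnecessary once you note that $X=XAX$ immediately gives $\mathcal{N}(AX)=\mathcal{N}(X)$ and $\mathcal{R}(XA)=\mathcal{R}(X)$.
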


 \begin{proof}
$(1)$ Using Theorem \ref{WmWGWGMPproperth}\eqref{WmWGWGMPproperit03} and  \eqref{Wdrazinproit03},  we derive that
 \begin{equation*}
\mathcal{R}(  A A^{\textcircled{w}_m,W,\dag})=\mathcal{R}(AWA^{D,W})=
\mathcal{R}((AW)^{k+1})=\mathcal{R}((AW)^{k}).
\end{equation*}
Thus, by items  \eqref{WmWGWGMPproperit01} and \eqref{WmWGWGMPproperit04} in Theorem \ref{WmWGWGMPproperth}, it is clear that
\begin{equation*}
   A A^{\textcircled{w}_m,W,\dag}=
   P_{\mathcal{R}\left(A A^{\textcircled{w}_m,W,\dag}\right),
       \mathcal{N}\left(A^{\textcircled{w}_m,W,\dag}\right)}
   =
       P_{\mathcal{R}\left((AW)^k\right),
       \mathcal{N}\left(((WA)^k)^*(WA)^{m+1}A^{\dag}\right)}.
\end{equation*}
\par
$(2)$ According to \eqref{WmWGMPdeneq} and \eqref{nwwmgqeqeq}, it follows that
\begin{align*}
\mathcal{N}(A^{\textcircled{w}_m,W,\dag}A)
 &=\mathcal{N}(WA^{\textcircled{w}_m,W}WAA^{\dag}A)
=\mathcal{N}(WA^{\textcircled{w}_m,W}WA)
\\&=\left((WA)^*\mathcal{N}^{\perp}(WA^{\textcircled{w}_m,W})\right)^{\perp}
=\mathcal{R}^{\perp}\left((WA)^*(((WA)^k)^*WA^{\star m})^*  \right)
\\&= \mathcal{N}\left(((WA)^k)^*WA^{\star m}WA  \right)
=\mathcal{N}\left(((WA)^k)^*(WA)^{m+1} \right),
\end{align*}
which, together with items  \eqref{WmWGWGMPproperit01} and \eqref{WmWGWGMPproperit03} in Theorem  \ref{WmWGWGMPproperth}, shows that
\begin{equation*}
  A^{\textcircled{w}_m,W,\dag}A=
  P_{\mathcal{R}\left( A^{\textcircled{w}_m,W,\dag}\right),
       \mathcal{N}\left(A^{\textcircled{w}_m,W,\dag}A\right)}
  =P_{\mathcal{R}\left((WA)^k\right),
       \mathcal{N}\left(((WA)^k)^*(WA)^{m+1}\right)}.
\end{equation*}
This completes the proof.
\end{proof}

Different characterizations  and representations of the $W$-$m$-WGMP inverse are provided  in terms of  other generalized inverses,  such as the Moore-Penrose inverse, weighted core-EP inverse,  $W$-weighted Drazin inverse,  etc.

\begin{theorem}
Let $A\in\mathbb{C}^{q\times n}$, $W(\neq 0)\in\mathbb{C}^{n\times q}$ and $m\in\mathbb{Z}^{+}$.  Then the following conditions
are equivalent:
\begin{enumerate}[$(1)$]
  \item  $X=  A^{\textcircled{w}_m,W,\dag}$;
  \item $AX=(A^{\textcircled{\dag},W})^{\star m}WA^{\star m}WAA^{\dag}$ and $X=WA^{D,W}WAX$ ;
  \item  $AX=(A^{\textcircled{\dag},W})^{\star m}WA^{\star m}WAA^{\dag}$  and  $\mathcal{R}(X) \subseteq \mathcal{R}{(WA^{D,W})}$.
\end{enumerate}

\end{theorem}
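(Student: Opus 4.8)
My plan is to derive both equivalences from the single identity $AWA^{\textcircled{w}_m,W}=(A^{\textcircled{\dag},W})^{\star m}WA^{\star m}$, which makes condition $(2)$ coincide with the defining system \eqref{wmWGMPorteqs}. To obtain it I would start from the weighted core-EP relation $A\star A^{\textcircled{\dag},W}\star A^{\textcircled{\dag},W}=A^{\textcircled{\dag},W}$, i.e.\ $AWA^{\textcircled{\dag},W}WA^{\textcircled{\dag},W}=A^{\textcircled{\dag},W}$, which is a direct check using the weighted core-EP decomposition of Lemma \ref{WcoreEPdecla} (there $A^{\textcircled{\dag},W}$ has block form with a nonsingular leading block and zeros elsewhere); peeling two copies of $A^{\textcircled{\dag},W}$ off the front of $(A^{\textcircled{\dag},W})^{\star(m+1)}$ and applying this relation gives $AW(A^{\textcircled{\dag},W})^{\star(m+1)}=(A^{\textcircled{\dag},W})^{\star m}$ for every $m\geq1$. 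Right-multiplying by $WA^{\star m}$ and using Lemma \ref{WmWGproperth}\eqref{WmWGproperit00}, namely $A^{\textcircled{w}_m,W}=(A^{\textcircled{\dag},W})^{\star(m+1)}WA^{\star m}$, yields $AWA^{\textcircled{w}_m,W}=(A^{\textcircled{\dag},W})^{\star m}WA^{\star m}$, and one further right multiplication by $WAA^{\dag}$ gives $AWA^{\textcircled{w}_m,W}WAA^{\dag}=(A^{\textcircled{\dag},W})^{\star m}WA^{\star m}WAA^{\dag}$.

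Given this, $(1)\Leftrightarrow(2)$ is immediate: by the last identity the two equations of $(2)$ are, equation by equation, the two equations of the system \eqref{wmWGMPorteqs}; by Theorem \ref{origdenWWMMPinth} that system has the unique solution $X=WA^{\textcircled{w}_m,W}WAA^{\dag}$, which by \eqref{WmWGMPdeneq} is $A^{\textcircled{w}_m,W,\dag}$, so $X$ satisfies $(2)$ exactly when $X=A^{\textcircled{w}_m,W,\dag}$. For $(2)\Leftrightarrow(3)$: the implication $(2)\Rightarrow(3)$ holds because $X=WA^{D,W}(WAX)$ forces $\mathcal{R}(X)\subseteq\mathcal{R}(WA^{D,W})$, while conversely the standard identity $A^{D,W}WAWA^{D,W}=A^{D,W}$ shows $WA^{D,W}WA$ is idempotent and acts as the identity on $\mathcal{R}(WA^{D,W})$, so $\mathcal{R}(X)\subseteq\mathcal{R}(WA^{D,W})$ forces $WA^{D,W}WAX=X$; combined with the shared first equation this is $(2)$. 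Hence $(1)\Leftrightarrow(2)\Leftrightarrow(3)$.

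The one step with genuine content is the core-EP relation $AWA^{\textcircled{\dag},W}WA^{\textcircled{\dag},W}=A^{\textcircled{\dag},W}$ (and the consequent $AW(A^{\textcircled{\dag},W})^{\star(m+1)}=(A^{\textcircled{\dag},W})^{\star m}$); everything afterwards is bookkeeping with Theorem \ref{origdenWWMMPinth}, Lemma \ref{WmWGproperth}, and the $\{2\}$-inverse property of $A^{D,W}$. If that relation is not available from earlier material, I would record it as a short remark proved from Lemma \ref{WcoreEPdecla}; it can also be replaced in the argument for $(3)\Rightarrow(1)$ by the observation that Lemma \ref{WcoreEPdecla} makes $A$ injective on $\mathcal{R}((WA)^{k})=\mathcal{R}(WA^{D,W})$ (Theorem \ref{WmWGWGMPproperth}\eqref{WmWGWGMPproperit03}), so that $AX$ together with $\mathcal{R}(X)\subseteq\mathcal{R}(WA^{D,W})$ pins down $X$, while $A^{\textcircled{w}_m,W,\dag}$ meets both conditions of $(3)$ by the first paragraph and Theorem \ref{WmWGWGMPproperth}\eqref{WmWGWGMPproperit03}.
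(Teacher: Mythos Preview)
Your proposal is correct and follows essentially the same route as the paper: both arguments hinge on the relation $AWA^{\textcircled{\dag},W}WA^{\textcircled{\dag},W}=A^{\textcircled{\dag},W}$ (the paper derives it from \eqref{AWcoreEPeqWACepeq} rather than from Lemma~\ref{WcoreEPdecla}) to rewrite the first equation of $(2)$ as $AX=AWA^{\textcircled{w}_m,W}WAA^{\dag}$, so that $(2)$ coincides with the defining system \eqref{wmWGMPorteqs} and Theorem~\ref{origdenWWMMPinth} gives $(1)\Leftrightarrow(2)$. For $(3)\Rightarrow(2)$ the paper passes through $(WA)^{D}$ and \eqref{Wdrazinproit01} to obtain $X=WA^{D,W}WAX$ from $\mathcal{R}(X)\subseteq\mathcal{R}(WA^{D,W})$, whereas you use the $\{2\}$-inverse identity $A^{D,W}WAWA^{D,W}=A^{D,W}$ directly; these are minor variants of the same idea.
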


\begin{proof}
$(1) \Rightarrow (2)$.
Using \cite[Theorems 2.2 and 2.3]{wcoreepigaore},  i.e.,
\begin{equation}\label{AWcoreEPeqWACepeq}
   A^{\textcircled{\dag},W}=A((WA)^{\textcircled{\dag}})^2,
\end{equation}
 we have that
\begin{align*}
  AWA^{\textcircled{\dag},W}WA^{\textcircled{\dag},W}=
  AWA((WA)^{\textcircled{\dag}})^2WA((WA)^{\textcircled{\dag}})^2
  = A((WA)^{\textcircled{\dag}})^2=A^{\textcircled{\dag},W}.
\end{align*}
Then, using \eqref{wmWGMPorteqs} and Lemma \ref{WmWGproperth}\eqref{WmWGproperit00},  we derive  that
\begin{align}
   AA^{\textcircled{w}_m,W,\dag}& =AWA^{\textcircled{w}_m,W}WAA^{\dag}=
  AW  (A^{\textcircled{\dag},W})^{\star (m+1)}WA^{\star m} WAA^{\dag}
  \nonumber\\
  &=AW A^{\textcircled{\dag},W}WA^{\textcircled{\dag},W} W (A^{\textcircled{\dag},W})^{\star (m-1)}WA^{\star m} WAA^{\dag}\nonumber\\
  &= A^{\textcircled{\dag},W} W (A^{\textcircled{\dag},W})^{\star (m-1)}WA^{\star m} WAA^{\dag}\nonumber\\
  &= (A^{\textcircled{\dag},W})^{\star m}WA^{\star m} WAA^{\dag}.\label{AXWGMPorcoreepeq}
\end{align}
Thus, ``$(1) \Rightarrow (2)$"    is clear by \eqref{wmWGMPorteqs}. \par
$(2) \Rightarrow (3)$.  It is apparent.
\par
$(3) \Rightarrow (1)$. By \eqref{WADrwakreqeq}, \eqref{BBDProeqeq} and \eqref{Wdrazinproit01}, it follows that
\begin{align*}
 \mathcal{R}(X) \subseteq \mathcal{R}{(WA^{D,W})}  & \Leftrightarrow
 \mathcal{R}(X) \subseteq \mathcal{R}{((WA)^k)}
  \Leftrightarrow   X=(WA)^DWAX\\
 & \Leftrightarrow X=WA((WA)^D)^2WAX
\Leftrightarrow X=WA^{D,W}WAX,
\end{align*}
which, together with \eqref{AXWGMPorcoreepeq} and \eqref{wmWGMPorteqs}, implies that the  item $(1)$ is true.
\end{proof}

 \begin{theorem}\label{WMGMPinrepth}
 Let $A\in\mathbb{C}^{q\times n}$, $W(\neq 0)\in\mathbb{C}^{n\times q}$, $m\in\mathbb{N}^{+}$ and  $k=\max\{{\rm Ind}(AW),{\rm Ind}(WA)\}$. Then:
\begin{enumerate}[$(1)$]
 \item\label{WMGMPinrepitem01}  $  A^{\textcircled{w}_m,W,\dag} = W\left((AW)^mA^{\textcircled{\dag},W}WA\right)^{\#,W}WA^{\star m}A^{\dag}$;
  \item\label{WMGMPinrepitem02} $  A^{\textcircled{w}_m,W,\dag} =((WA)^D)^{  (m+1)}P_{\mathcal{R}((WA)^k)}WA^{\star (m+1)}A^{\dag}$;
     \item\label{WMGMPinrepitem03}
$  A^{\textcircled{w}_m,W,\dag}=W A^{\star l}W(WA^{\star (l+m+1)}W)^{\dag}WA^{\star (m+1)}A^{\dag}
    $, where  $l\geq  k$;
  \item\label{WMGMPinrepitem04}  $ A^{\textcircled{w}_m,W,\dag}=W(WA^{\star (m+1)}WP_{\mathcal{R}((AW)^k)})^{\dag}WA^{\star (m+1)}A^{\dag} $;
  \item\label{WMGMPinrepitem05}  $ A^{\textcircled{w}_m,W,\dag}=WA^{\star (m-1)}W(A^{\star m})^{\textcircled{w},W}WAA^{\dag}$;
  \item\label{WMGMPinrepitem06}
  $ A^{\textcircled{w}_m,W,\dag}=((WA)^{\textcircled{w}})^{ m}WA^{\star m}A^{\dag}$;
  \item\label{WMGMPinrepitem07} $ A^{\textcircled{w}_m,W,\dag}=(WA)^{\textcircled{w}_m}WAA^{\dag}$.
\end{enumerate}

\begin{proof}
From \cite[Lemma 2.2(3)]{WmWGinGaore}, i.e., $A^{\textcircled{\dag},W}WAW=P_{\mathcal{R}((AW)^k),
        \mathcal{N}\left(((WA)^k)^{*}WAW\right)}$,
  and \cite[Proposition 3.1]{wcoreepigaore}, i.e., $\mathcal{R}(A^{\textcircled{\dag},W})=\mathcal{R}((AW)^k)$,  it is  clear that
  \begin{equation}\label{AWAWAAWcoreepeq}
   A^{\textcircled{\dag},W}WAW AW  A^{\textcircled{\dag},W}=AW  A^{\textcircled{\dag},W}.
  \end{equation}
By the fact that $(B^m)^{\#}={(B^{\#})}^m$ for $B\in\mathbb{C}^{n\times n} $ and ${\rm Ind}(B) \leq 1 $,  and \eqref{Wdrazinproit01}, it can be verified that
\begin{equation*}
  (A^{\#,W})^{\star m}=   (A ^{\star m})^{\#,W}.
\end{equation*}
Then, using \cite[Theorem 4.3(5)]{WmWGinGaore}, i.e.,
$A^{\textcircled{w}_m,W}=(A^{\textcircled{w},W})^{\star m}WA^{\star (m-1)}$, and \cite[Theorem 7]{wweakgroupFerre}, i.e., $A^{\textcircled{w},W}=(AWA^{\textcircled{\dag},W}WA)^{\#,W}$, by \eqref{WmWGMPdeneq} and  \eqref{AWAWAAWcoreepeq} we have that
\begin{align*}
A^{\textcircled{w}_m,W,\dag} =& WA^{\textcircled{w}_m,W}WAA^{\dag}
=W(A^{\textcircled{w},W})^{\star m}WA^{\star (m-1)}WAA^{\dag}=
 \\=&
 W((AWA^{\textcircled{\dag},W}WA)^{\#,W})^{\star m}WA^{\star m}A^{\dag}= W((AWA^{\textcircled{\dag},W}WA)^{\star m})^{\#,W}WA^{\star m}A^{\dag}
\\=& W((AW)^mA^{\textcircled{\dag},W}WA)^{\#,W}WA^{\star m}A^{\dag}.
\end{align*}
\par
The proof of items \eqref{WMGMPinrepitem02}--\eqref{WMGMPinrepitem07} can be obtained by \cite[Theorem 4.3]{WmWGinGaore} and \eqref{WmWGMPdeneq} immediately.
\end{proof}

\end{theorem}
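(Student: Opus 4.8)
The statement to prove is Theorem~\ref{WMGMPinrepth}, which gives seven representations of the $W$-$m$-WGMP inverse. The overall strategy is to reduce everything to the identity $A^{\textcircled{w}_m,W,\dag}=WA^{\textcircled{w}_m,W}WAA^{\dag}$ from \eqref{WmWGMPdeneq} and then feed in the various known representations of the underlying $W$-weighted $m$-weak group inverse $A^{\textcircled{w}_m,W}$ collected in \cite[Theorems 4.3 and 4.2]{WmWGinGaore} and the weighted core-EP identities from \cite{wcoreepigaore}. First I would establish item \eqref{WMGMPinrepitem01} by hand, since it is the one requiring an extra observation. The key auxiliary fact is \eqref{AWAWAAWcoreepeq}, namely $A^{\textcircled{\dag},W}WAWAWA^{\textcircled{\dag},W}=AWA^{\textcircled{\dag},W}$, which follows by combining the idempotency-type formula $A^{\textcircled{\dag},W}WAW=P_{\mathcal{R}((AW)^k),\mathcal{N}(((WA)^k)^*WAW)}$ from \cite[Lemma 2.2(3)]{WmWGinGaore} with $\mathcal{R}(A^{\textcircled{\dag},W})=\mathcal{R}((AW)^k)$. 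Using this, together with the commuting-power property $(B^m)^{\#}=(B^{\#})^m$ for matrices of index at most $1$ and the translation $(A^{\#,W})^{\star m}=(A^{\star m})^{\#,W}$ (via \eqref{Wdrazinproit01}), I would chain: $A^{\textcircled{w}_m,W}=(A^{\textcircled{w},W})^{\star m}WA^{\star(m-1)}$ from \cite[Theorem 4.3(5)]{WmWGinGaore}, then $A^{\textcircled{w},W}=(AWA^{\textcircled{\dag},W}WA)^{\#,W}$ from \cite[Theorem 7]{wweakgroupFerre}, then push the $\star m$-power inside the group-inverse using \eqref{AWAWAAWcoreepeq} to telescope $(AWA^{\textcircled{\dag},W}WA)^{\star m}$ into $(AW)^mA^{\textcircled{\dag},W}WA$. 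Substituting into \eqref{WmWGMPdeneq} gives \eqref{WMGMPinrepitem01}.

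For items \eqref{WMGMPinrepitem02}--\eqref{WMGMPinrepitem07}, the plan is simply to take each corresponding representation of $A^{\textcircled{w}_m,W}$ from \cite[Theorem 4.3]{WmWGinGaore} — e.g. $A^{\textcircled{w}_m,W}=((WA)^D)^{m+1}P_{\mathcal{R}((WA)^k)}WA^{\star(m+1)}$ for (2), the limit/$\dag$-type formulas for (3) and (4), the $(A^{\star m})^{\textcircled{w},W}$-formula for (5), the $((WA)^{\textcircled{w}})^m$-formula for (6), and $A^{\textcircled{w}_m,W}=(WA)^{\textcircled{w}_m}W\,(\cdot)$-type identity for (7) — and left-multiply by $W$, right-multiply by $WAA^{\dag}$, reading off \eqref{WmWGMPdeneq}. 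In a couple of cases one must absorb a trailing $WA^{\dag}$ or rearrange a projector $P_{\mathcal{R}((AW)^k)}$ versus $P_{\mathcal{R}((WA)^k)}$ using $\mathcal{R}(A^{D,W})=\mathcal{R}((AW)^k)$ (eq. \eqref{Wdrazinproit03}) and its $WA$-analogue, but these are routine transcriptions.

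\textbf{Main obstacle.} The only genuinely nontrivial step is the telescoping in item \eqref{WMGMPinrepitem01}: verifying that raising $AWA^{\textcircled{\dag},W}WA$ to the $\star m$-th power and taking its $\#,W$-inverse coincides with $((AW)^mA^{\textcircled{\dag},W}WA)^{\#,W}$. This rests entirely on \eqref{AWAWAAWcoreepeq} being applied $m-1$ times to collapse the interior copies of $A^{\textcircled{\dag},W}WAWAW$ down to single factors of $AW$, and one must be careful that the group-inverse (rather than merely a reflexive $\{1,2,5\}$-inverse) is preserved under this collapse — which it is, because $AWA^{\textcircled{\dag},W}WA$ has $W$-index at most $1$ by the weak-group-inverse theory, so $(\cdot)^{\#,W}$ genuinely exists and commutes with $\star$-powers. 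Everything downstream of that identity is bookkeeping.
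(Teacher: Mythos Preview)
Your proposal is correct and follows essentially the same route as the paper: you establish \eqref{AWAWAAWcoreepeq} from the same two cited facts, use it together with \cite[Theorem 4.3(5)]{WmWGinGaore}, \cite[Theorem 7]{wweakgroupFerre}, and the identity $(A^{\#,W})^{\star m}=(A^{\star m})^{\#,W}$ to prove item \eqref{WMGMPinrepitem01}, and then read items \eqref{WMGMPinrepitem02}--\eqref{WMGMPinrepitem07} off \cite[Theorem 4.3]{WmWGinGaore} via \eqref{WmWGMPdeneq}. The paper's proof is identical in structure and in the specific auxiliary results invoked.
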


\begin{remark}
The proof of Theorem \ref{WMGMPinrepth}\eqref{WMGMPinrepitem01}  indicates  a new expression for the $W$-weighted $m$-weak group inverse, namely,
\begin{equation*}
  A^{\textcircled{w}_m,W} = \left((AW)^mA^{\textcircled{\dag},W}WA\right)^{\#,W}WA^{\star (m-1)}.
\end{equation*}
Moreover, items \eqref{WMGMPinrepitem02}, \eqref{WMGMPinrepitem04}, \eqref{WMGMPinrepitem07}, and   \eqref{WMGMPinrepitem03} in Theorem \ref{WMGMPinrepth} extend \cite[Theorem 5.1 (a), (d) and (e)]{Jiangmwgmpiref}, and  \cite[Theorem 2.1]{DijanaWCIexref}, respectively.
\end{remark}

At the end of this section, we use the weighted core-EP decomposition and weighted  Hartwig-Spindelb\"{o}ck decomposition to give two   canonical forms of the $W$-$m$-WGMP inverse.

\begin{theorem}\label{WmWGWGMPdecTh}
Let $A\in\mathbb{C}^{q\times n}$ and  $W(\neq 0)\in\mathbb{C}^{n\times q}$ be given in \eqref{AWdeceqs}, and  let  $m\in\mathbb{Z}^{+}$ and $B_m=\sum\limits_{j = 0}^{m - 1} {{{({W_1}A{}_1)}^j}(W_1A_2+W_2A_3)(W_3A_3)^{m-1-j}}$. Then,
\begin{equation}\label{WmWGWGMPdecTeq}
  A^{\textcircled{w}_m,W,\dag} = V
  \left(
    \begin{array}{cc}
      A_1^{-1} &  (W_1A_1)^{-1}W_2A_3A_3^{\dag}+ (W_1A_1)^{-(m+1)}B_mW_3A_3A_3^{\dag}\\
     0 & 0 \\
    \end{array}
  \right)
  U^*.
\end{equation}
\end{theorem}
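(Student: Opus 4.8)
The plan is to compute $A^{\textcircled{w}_m,W,\dag}=WA^{\textcircled{w}_m,W}WAA^{\dag}$ directly from the weighted core-EP decomposition \eqref{AWdeceqs}, reducing everything to block matrix arithmetic. First I would record the building blocks relative to the unitaries $U,V$: from Lemma~\ref{WmWGproperth}\eqref{WmWGproperit00} we have $A^{\textcircled{w}_m,W}=(A^{\textcircled{\dag},W})^{\star(m+1)}WA^{\star m}$, so I need the block forms of the weighted core-EP inverse $A^{\textcircled{\dag},W}$, of the powers $A^{\star m}$ (equivalently $(AW)^{m}$), of $W$, and of $A^{\dag}$. In the $U,V$ coordinates, $A$ and $W$ are block upper triangular with invertible $(1,1)$ blocks $A_1,W_1$ and nilpotent $(2,2)$ "blocks" $A_3W_3$, $W_3A_3$; the weighted core-EP inverse is known (from \cite{FerreyraWeCoerEPinref}, the source of the decomposition) to have the form $A^{\textcircled{\dag},W}=U\bigl(\begin{smallmatrix} (W_1A_1W_1)^{-1}W_1 & * \\ 0 & 0\end{smallmatrix}\bigr)V^{*}$ — i.e. supported on the first block row/column with $(1,1)$ entry $A_1^{-1}$ after the $\star$-products are accounted for — and I would either cite this or re-derive it quickly from $A^{\textcircled{\dag},W}=A((WA)^{\textcircled{\dag}})^2$ (equation \eqref{AWcoreEPeqWACepeq}). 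The key combinatorial object is $(AW)^{m}$ or rather $WA^{\star m}=W(AW)^{m-1}A$-type products; expanding the block upper-triangular product $\bigl(\begin{smallmatrix} W_1A_1 & W_1A_2+W_2A_3 \\ 0 & W_3A_3\end{smallmatrix}\bigr)^{m}$ by the standard telescoping formula for powers of a triangular block matrix produces exactly the $(1,2)$ entry $\sum_{j=0}^{m-1}(W_1A_1)^{j}(W_1A_2+W_2A_3)(W_3A_3)^{m-1-j}=B_m$, which is where the definition of $B_m$ comes from.

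Next I would assemble the product. Since $A^{\textcircled{w}_m,W}$ lands in $\mathcal{R}((AW)^{k})=U\bigl(\begin{smallmatrix}I_t\\0\end{smallmatrix}\bigr)$ (Lemma~\ref{WmWGproperth}\eqref{WmWGproperit03}), it has the block form $U\bigl(\begin{smallmatrix} P & Q \\ 0 & 0\end{smallmatrix}\bigr)V^{*}$ for some $P\in\mathbb{C}^{t\times t}$, $Q\in\mathbb{C}^{t\times(n-t)}$; multiplying by $W=V(\cdots)U^{*}$ and then by $A=U(\cdots)V^{*}$ and then by $A^{\dag}$ keeps everything in the first block row, so the final answer must have the shape displayed in \eqref{WmWGWGMPdecTeq} with some $(1,1)$ and $(1,2)$ entries to be identified. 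For the $(1,1)$ entry I expect that all the nilpotent contributions drop out and one is left with $A_1^{-1}$ — this follows because $WAA^{\dag}$ restricted to the top-left is $W_1A_1A_1^{\dag}$-type and, more cleanly, because by Theorem~\ref{WmWGWGMPproperth}\eqref{WmWGWGMPproperit01} and \eqref{WmWGWGMPproperit03} $A^{\textcircled{w}_m,W,\dag}$ is the outer inverse $A^{(2)}_{\mathcal{R}((WA)^k),\,\mathcal{N}(\cdots)}$, whose $(1,1)$ block is forced to be $A_1^{-1}$ once one checks $AA^{\textcircled{w}_m,W,\dag}$ acts as the identity on $\mathcal{R}((AW)^k)=$ top block. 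The $(1,2)$ entry is the real computation: it is the $(1,2)$ block of $WA^{\textcircled{w}_m,W}WAA^{\dag}$, and I would track the two sources of off-diagonal contribution — the $W_2A_3$ term in $W$ and the $B_m$ telescoping sum in $(AW)^{m}$ — keeping in mind that $A_3^{\dag}$ enters through $A^{\dag}$ acting on the bottom block (note $A^{\dag}$ is computed from the decomposition via $A^{\dag}=V\bigl(\begin{smallmatrix}A_1^{-1} & -A_1^{-1}A_2A_3^{\dag}\\0 & A_3^{\dag}\end{smallmatrix}\bigr)U^{*}$ only if $A_3$ has a nice form; more precisely $A^{\dag}$ is determined by $AA^{\dag}=P_{\mathcal{R}(A)}$, and the combination $A_3A_3^{\dag}$ appearing in \eqref{WmWGWGMPdecTeq} is exactly $P_{\mathcal{R}(A_3)}$ pulled back). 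After bookkeeping, the $(1,2)$ entry collects as $(W_1A_1)^{-1}W_2A_3A_3^{\dag}$ (the "$m$-independent" piece coming from $W_2A_3$ hitting the degree-one part) plus $(W_1A_1)^{-(m+1)}B_mW_3A_3A_3^{\dag}$ (the telescoping piece), matching \eqref{WmWGWGMPdecTeq}.

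The main obstacle is the second-paragraph bookkeeping: correctly writing the block form of the $\star$-powers $(A^{\textcircled{\dag},W})^{\star(m+1)}$ and $WA^{\star m}$ in the $U,V$ frame, since the $\star$-product interleaves $W$ between copies of $A$ and the core-EP inverse itself is only block-triangular in a twisted sense, so the exponents of $(W_1A_1)$ must be matched carefully against the $m+1$ factors to land the clean powers $-1$ and $-(m+1)$ in the two off-diagonal terms. I would organize this by first proving the auxiliary identity $A^{\textcircled{w}_m,W}=U\bigl(\begin{smallmatrix} (A_1W_1)^{-(m+1)}(W_1A_1)^{m}W_1 & \widetilde B \\ 0 & 0\end{smallmatrix}\bigr)V^{*}$ for an explicit $\widetilde B$ built from $B_m$ (this is essentially the content of the decomposition of the $W$-weighted $m$-weak group inverse, which may already be available in \cite{WmWGinGaore} and can be cited), and only then multiply by $WAA^{\dag}$; the final multiplication is then short. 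A sanity check at $m=1$ against the known $W$-weighted weak core inverse decomposition, and at $W=I_n$ against the $m$-weak group MP inverse decomposition, would confirm the formula and catch sign/exponent errors.
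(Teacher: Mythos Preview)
Your proposal is correct and follows essentially the same approach as the paper: compute $A^{\textcircled{w}_m,W,\dag}=WA^{\textcircled{w}_m,W}WAA^{\dag}$ in block form via the weighted core-EP decomposition, citing the known block form of $A^{\textcircled{w}_m,W}$ from \cite{WmWGinGaore} and the block form of $AA^{\dag}=U\bigl(\begin{smallmatrix} I_t & 0\\ 0 & A_3A_3^{\dag}\end{smallmatrix}\bigr)U^{*}$ (as in \cite{WCMPMosre}). The paper's proof is simply the streamlined version of your plan: it computes $WA^{\textcircled{w}_m,W}W$ first (one block product) and then right-multiplies by $AA^{\dag}$, so you need not wrestle with $(A^{\textcircled{\dag},W})^{\star(m+1)}$ or with $A^{\dag}$ itself---only the projector $AA^{\dag}$ enters, exactly as you anticipated.
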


\begin{proof}
Using \eqref{AWdeceqs} and \cite[Theorem 3.7]{WmWGinGaore}, i.e.,
\begin{equation*}
A^{\textcircled{w}_m,W}=
U
\left(
  \begin{array}{cc}
   (W_1A_1W_1)^{-1}  & (A_1W_1)^{-(m+1)}W_1^{-1}B_m \\
    0 & 0 \\
  \end{array}
\right)
 V^*,
\end{equation*}
we deduce that
 \begin{align*}
 W A^{\textcircled{w}_m,W}W&=V\left(
       \begin{array}{cc}
        W_1 & W_2 \\
         0 & W_3 \\
       \end{array}
     \right)
\left(
  \begin{array}{cc}
   (W_1A_1W_1)^{-1}  & (A_1W_1)^{-(m+1)}W_1^{-1}B_m \\
    0 & 0 \\
  \end{array}
\right)
\left(
       \begin{array}{cc}
        W_1 & W_2 \\
         0 & W_3 \\
       \end{array}
     \right)
  U^*\\
  &=V\left(
  \begin{array}{cc}
   A_1^{-1}  & (W_1A_1)^{-1}W_2+(W_1A_1)^{-(m+1)}B_mW_3 \\
    0 & 0 \\
  \end{array}
\right)U^*.
\end{align*}
Then,  by \eqref{WmWGMPdeneq} and \cite[Theorem2.3]{WCMPMosre}, i.e.,
$  AA^{\dag}=U\left(
       \begin{array}{cc}
         I_t & 0 \\
         0 & A_3A^{\dag}_3 \\
       \end{array}
     \right)U^*,$
we have that
\begin{align*}
    A^{\textcircled{w}_m,W,\dag}&=WA^{\textcircled{w}_m,W}WAA^{\dag}\\
    &=
    V\left(
  \begin{array}{cc}
   A_1^{-1}  & (W_1A_1)^{-1}W_2+(W_1A_1)^{-(m+1)}B_mW_3 \\
    0 & 0 \\
  \end{array}
\right)U^*U\left(
       \begin{array}{cc}
         I_t & 0 \\
         0 & A_3A^{\dag}_3 \\
       \end{array}
     \right)U^*\\
     &= V
  \left(
    \begin{array}{cc}
      A_1^{-1} &  (W_1A_1)^{-1}W_2A_3A_3^{\dag}+ (W_1A_1)^{-(m+1)}B_mW_3A_3A_3^{\dag}\\
     0 & 0 \\
    \end{array}
  \right)
  U^*,
\end{align*}
which completes the proof.
\end{proof}

\begin{theorem}\label{WHSthnegenqeth}
Let $A\in\mathbb{C}^{q\times n}$ and  $W(\neq 0)\in\mathbb{C}^{n\times q}$ be given in \eqref{wHSdecAWeq}. Then,
\begin{equation*}
  A^{\textcircled{w}_m,W,\dag} =
 S \left(
    \begin{array}{cc}
    \Sigma_2K_2(\Sigma_1K_1)^{\textcircled{w}_m,\Sigma_2K_2}\Sigma_2K_2 & 0 \\
      0 & 0 \\
    \end{array}
  \right)T^*.
\end{equation*}

\end{theorem}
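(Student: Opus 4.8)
The plan is to use the weighted Hartwig--Spindelb\"{o}ck decomposition \eqref{wHSdecAWeq} directly and to reduce the computation of $A^{\textcircled{w}_m,W,\dag}$ to that of the (unweighted) $m$-weak group inverse of the $r_1\times r_1$ block. First I would substitute \eqref{wHSdecAWeq} into the defining formula \eqref{WmWGMPdeneq}, $A^{\textcircled{w}_m,W,\dag}=WA^{\textcircled{w}_m,W}WAA^{\dag}$, which splits the task into three pieces: an expression for $WAA^{\dag}$, an expression for the factor $WA$ sandwiched in the middle, and an expression for $A^{\textcircled{w}_m,W}$ in the $S,T$ coordinates. For $AA^{\dag}$, since $A=T\left(\begin{smallmatrix}\Sigma_1K_1 & \Sigma_1L_1\\ 0 & 0\end{smallmatrix}\right)S^*$ has its first $r_1$ rows of full row rank, $AA^{\dag}=T\left(\begin{smallmatrix}I_{r_1} & 0\\ 0 & 0\end{smallmatrix}\right)T^*$, and a short block multiplication gives $WAA^{\dag}=S\left(\begin{smallmatrix}\Sigma_2K_2\Sigma_1K_1 & 0\\ 0 & 0\end{smallmatrix}\right)S^*$ and $WA=S\left(\begin{smallmatrix}\Sigma_2K_2\Sigma_1K_1 & \Sigma_2K_2\Sigma_1L_1\\ 0 & 0\end{smallmatrix}\right)S^*$, so $WA$ is block upper-triangular with only the $(1,1)$ block nonzero in the first block row.

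Next I would pin down $A^{\textcircled{w}_m,W}$ in these coordinates. The cleanest route is Theorem~\ref{WMGMPinrepth}\eqref{WMGMPinrepitem07} combined with $m$-weak group analogue, or more directly the representation $A^{\textcircled{w}_m,W}=(WA)^{\textcircled{w}_m}WAA^{\dag}\cdot(\dots)$; but the most transparent is to use Lemma~\ref{WmWGproperth}\eqref{WmWGproperit01}, namely $A^{\textcircled{w}_m,W}\in(WAW)\{2\}$, together with the range/null-space data in Lemma~\ref{WmWGproperth}\eqref{WmWGproperit03}--\eqref{WmWGproperit04}. In the $S,T$-coordinates write $A^{\textcircled{w}_m,W}=T\left(\begin{smallmatrix}X_{11} & X_{12}\\ X_{21} & X_{22}\end{smallmatrix}\right)S^*$; the outer-inverse condition $(WAW)X(WAW)=WAW$ forces $X_{21}=0$, $X_{22}=0$ and the $(1,1)$-block to be an outer inverse of $\Sigma_2K_2\Sigma_1K_1\Sigma_2K_2$. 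The key algebraic observation — this is where I expect the real work to sit — is that the $(1,1)$ block of $WAW$ is exactly $(\Sigma_2K_2)(\Sigma_1K_1)(\Sigma_2K_2)$, which is the $W_0$-product of $\Sigma_1K_1$ with itself iterated once under the weight $W_0=\Sigma_2K_2$, so by the very definition of the $m$-weak group inverse under a weight, the relevant $(1,1)$ block of $A^{\textcircled{w}_m,W}$ together with the trailing factors collapses to $(\Sigma_1K_1)^{\textcircled{w}_m,\Sigma_2K_2}$. Concretely, after multiplying out $WA^{\textcircled{w}_m,W}\cdot WA\cdot A^{\dag}$ and using that $WAA^\dag$ kills everything outside the $(1,1)$ block, the $(2,2),(1,2),(2,1)$ blocks all vanish and the surviving $(1,1)$ block is $\Sigma_2K_2\,(\Sigma_1K_1)^{\textcircled{w}_m,\Sigma_2K_2}\,\Sigma_2K_2$, which is the asserted formula.

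The main obstacle is bookkeeping: one must verify that the weighted $m$-weak group inverse $A^{\textcircled{w}_m,W}$ has a genuinely block-triangular form in the $S,T$ bases with vanishing second block row and column, and that its $(1,1)$ entry, when pre- and post-multiplied by $\Sigma_2K_2$ and combined with the $m-1$ extra weighted powers built into the definition $A^{\textcircled{w}_m,W}=(A^{\textcircled{\dag},W})^{\star(m+1)}WA^{\star m}$ (Lemma~\ref{WmWGproperth}\eqref{WmWGproperit00}), reassembles precisely into $(\Sigma_1K_1)^{\textcircled{w}_m,\Sigma_2K_2}$ rather than some other weighted inverse. I would handle this by first proving the auxiliary claim ``if $A,W$ are in the form \eqref{wHSdecAWeq} then $A^{\textcircled{w}_m,W}=T\left(\begin{smallmatrix}(\Sigma_2K_2)(\Sigma_1K_1)^{\textcircled{w}_m,\Sigma_2K_2}(\Sigma_2K_2) (\Sigma_1 K_1\Sigma_2K_2)^{-1}\!\!\!\!\!\!\!\!\!\!\!\! & \ast\\ 0 & 0\end{smallmatrix}\right)S^*$'' — invoking Lemma~\ref{WcoreEPdecla} applied to the pair $(\Sigma_1K_1,\Sigma_2K_2)$ to identify the relevant $\textcircled{\dag}$-factor — and then the final multiplication by $WAA^\dag$ trims the off-diagonal $\ast$ and leaves the stated block. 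Once the auxiliary claim is in hand, Theorem~\ref{WHSthnegenqeth} follows by one more $2\times2$ block product.
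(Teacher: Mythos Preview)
Your overall plan --- substitute \eqref{wHSdecAWeq} into \eqref{WmWGMPdeneq} and compute block by block --- matches the paper's, but two concrete missteps would derail the argument. First, $WAA^{\dag}$ is $n\times q$, so its canonical form is $S(\cdot)T^*$, not $S(\cdot)S^*$; since $AA^{\dag}=T\left(\begin{smallmatrix}I_{r_1}&0\\0&0\end{smallmatrix}\right)T^*$, the surviving top-left block of $WAA^{\dag}$ is $\Sigma_2K_2$, not $\Sigma_2K_2\Sigma_1K_1$. Second, your auxiliary $(1,1)$-block formula for $A^{\textcircled{w}_m,W}$ invokes $(\Sigma_1K_1\Sigma_2K_2)^{-1}$, which need not exist (nothing in \eqref{wHSdecAWeq} forces $K_1$ or $K_2$ to be invertible); the correct $(1,1)$ block turns out to be simply $(\Sigma_1K_1)^{\textcircled{w}_m,\Sigma_2K_2}$, with an off-diagonal piece that is annihilated later by the right-multiplication by $W$.

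The paper sidesteps both issues by not computing $A^{\textcircled{w}_m,W}$ separately at all. It expands directly via Lemma~\ref{WmWGproperth}\eqref{WmWGproperit00},
\[
A^{\textcircled{w}_m,W,\dag}=(WA^{\textcircled{\dag},W})^{m+1}(WA)^{m}\,W\,AA^{\dag},
\]
and plugs in the already-known Hartwig--Spindelb\"{o}ck form of $A^{\textcircled{\dag},W}$ from \cite[Theorem~2.4]{wcoreepigaore}, which via \eqref{AWcoreEPeqWACepeq} yields $WA^{\textcircled{\dag},W}=S\left(\begin{smallmatrix}\Sigma_2K_2(\Sigma_1K_1)^{\textcircled{\dag},\Sigma_2K_2}&0\\0&0\end{smallmatrix}\right)S^*$. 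Every factor in the product then has zero second block-row, so only the $(1,1)$ corner survives; it equals $\bigl(\Sigma_2K_2(\Sigma_1K_1)^{\textcircled{\dag},\Sigma_2K_2}\bigr)^{m+1}(\Sigma_2K_2\Sigma_1K_1)^{m}\Sigma_2K_2$, and a second application of Lemma~\ref{WmWGproperth}\eqref{WmWGproperit00} to the small pair $(\Sigma_1K_1,\Sigma_2K_2)$ identifies this as $\Sigma_2K_2(\Sigma_1K_1)^{\textcircled{w}_m,\Sigma_2K_2}\Sigma_2K_2$. Your detour through the outer-inverse characterization and a nested weighted core-EP decomposition of the sub-blocks is unnecessary.
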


\begin{proof}
Evidently,
\begin{align*}
  WA&=S\left(
      \begin{array}{cc}
    \Sigma_2 K_2     &       \Sigma_2 L_2  \\
      0   & 0 \\
      \end{array}
    \right)  T^*T\left(
      \begin{array}{cc}
     \Sigma_1 K_1     &       \Sigma_1 L_1  \\
      0   & 0 \\
      \end{array}
    \right)S^*
    \\&=S\left(
      \begin{array}{cc}
    \Sigma_2 K_2  \Sigma_1 K_1     &         \Sigma_2 K_2 \Sigma_1 L_1  \\
      0   & 0 \\
      \end{array}
    \right) S^* .
\end{align*}
From \cite[Theorem 2.4]{wcoreepigaore}, i.e.,
\begin{equation*}
 A^{\textcircled{\dag},W}=  T
    \left(
    \begin{array}{cc}
  \Sigma_1K_1 ((\Sigma_2K_2\Sigma_1K_1)^{\textcircled{\dag}})^2 & 0
    \\
      0 & 0 \\
    \end{array}
  \right)S^*,
\end{equation*}
and \eqref{AWcoreEPeqWACepeq},
 it follows  that
\begin{align*}
 W  A^{\textcircled{\dag},W}& =
 S\left(
      \begin{array}{cc}
    \Sigma_2 K_2     &       \Sigma_2 L_2  \\
      0   & 0 \\
      \end{array}
    \right)T^* T
    \left(
    \begin{array}{cc}
  \Sigma_1K_1 ((\Sigma_2K_2\Sigma_1K_1)^{\textcircled{\dag}})^2 & 0
    \\
      0 & 0 \\
    \end{array}
  \right)S^*
  \\
  &=
 S \left(
    \begin{array}{cc}
  \Sigma_2 K_2  (\Sigma_1K_1)^{\textcircled{w}_m,\Sigma_2K_2}  & 0
    \\
      0 & 0 \\
    \end{array}
  \right)S^*.
\end{align*}
Then, using the fact that
$AA^{\dag}=T\left(
             \begin{array}{cc}
               I_{r_1} & 0 \\
               0 & 0 \\
             \end{array}
           \right)T^*
$, where $r_1={\rm rank }(A)$,  by \eqref{WmWGMPdeneq} and  Lemma \ref{WmWGproperth}\eqref{WmWGproperit00},   we have that
\begin{align*}
 A^{\textcircled{w}_m,W,\dag}&=WA^{\textcircled{w}_m,W}WAA^{\dag}
  =(  W A^{\textcircled{\dag},W} )^{m+1} (WA)^{m}WAA^{\dag}\\
  &=S \left(
    \begin{array}{cc}
  \Sigma_2 K_2  (\Sigma_1K_1)^{\textcircled{w}_m,\Sigma_2K_2}  & 0
    \\
      0 & 0 \\
    \end{array}
  \right)^{m+1}
  \left(
      \begin{array}{cc}
    \Sigma_2 K_2  \Sigma_1 K_1     &         \Sigma_2 K_2 \Sigma_1 L_1  \\
      0   & 0 \\
      \end{array}
    \right)^m \\
  & ~~~~ ~~\left(
      \begin{array}{cc}
    \Sigma_2 K_2     &       \Sigma_2 L_2  \\
      0   & 0 \\
      \end{array}
    \right)   \left(
             \begin{array}{cc}
               I_r & 0 \\
               0 & 0 \\
             \end{array}
           \right)T^*
  \\ &= S
  \left(
      \begin{array}{cc}
 \left( \Sigma_2 K_2  (\Sigma_1K_1)^{\textcircled{w}_m,\Sigma_2K_2}\right) ^{m+1} (\Sigma_2 K_2  \Sigma_1 K_1 )^m \Sigma_2 K_2     & 0 \\
      0   & 0 \\
      \end{array}
    \right)T^*\\
 &=S\left(
      \begin{array}{cc}
\Sigma_2 K_2  (\Sigma_1K_1)^{\textcircled{\dag},\Sigma_2K_2}
    \left(\Sigma_2K_2(\Sigma_1K_1)^{\textcircled{\dag},\Sigma_2K_2}\right)^m  (\Sigma_2 K_2  \Sigma_1 K_1 )^m \Sigma_2 K_2
    & 0 \\
      0   & 0 \\
      \end{array}
    \right)T^*
    \\&=S\left(
      \begin{array}{cc}
    \Sigma_2K_2(\Sigma_1K_1)^{\textcircled{w}_m,\Sigma_2K_2}\Sigma_2K_2 & 0 \\
      0 & 0 \\
      \end{array}
    \right)T^*,
\end{align*}
which completes  the proof.
\end{proof}

A new canonical form of the $m$-weak group MP inverse is given by Theorem \ref{WHSthnegenqeth} in the case $W=I_n$.

\begin{corollary}
Let $A\in\mathbb{C}^{n\times n}$ is given in \eqref{wHSdecAWeq} as $W=I_n$. Then,
\begin{equation*}
    A^{\textcircled{w}_m,\dag} =
 T \left(
    \begin{array}{cc}
 (\Sigma_1K_1)^{\textcircled{w}_m} & 0 \\
      0 & 0 \\
    \end{array}
  \right)T^*.
\end{equation*}
\end{corollary}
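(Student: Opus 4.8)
The plan is to read the corollary off from Theorem~\ref{WHSthnegenqeth} by setting $W=I_n$. The one substantive step is to identify the weighted Hartwig--Spindelb\"ock decomposition \eqref{wHSdecAWeq} of the pair $(A,I_n)$: since ${\rm rank}(I_n)=n$ we have $r_2=n$, so the zero block rows of the $W$-factor are absent and the block $L_2$ is void; then $K_2K_2^*=I_n$ forces $K_2$ to be unitary, and since $\Sigma_2$ is positive definite while $I_n=S\Sigma_2K_2T^*$ is unitary, we get $\Sigma_2=I_n$ and $K_2=S^*T$. Taking the decomposition in which $S=T$ (start from an ordinary Hartwig--Spindelb\"ock decomposition of $A$ and use its unitary factor for both $S$ and $T$) gives $\Sigma_2=K_2=I_n$, so that $A=T\left(\begin{smallmatrix}\Sigma_1K_1&\Sigma_1L_1\\0&0\end{smallmatrix}\right)T^*$ is just the ordinary decomposition of $A$ and the ``weight'' $\Sigma_2K_2$ appearing in Theorem~\ref{WHSthnegenqeth} is an identity.

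Substituting these data into the formula of Theorem~\ref{WHSthnegenqeth}, the two factors $\Sigma_2K_2$ flanking $(\Sigma_1K_1)^{\textcircled{w}_m,\Sigma_2K_2}$ drop out and the outer unitaries collapse to $T,T^*$, so the right-hand side becomes
\[
T\begin{pmatrix}(\Sigma_1K_1)^{\textcircled{w}_m,I}&0\\0&0\end{pmatrix}T^*.
\]
To finish I would use the specialization $M^{\textcircled{w}_m,W}=M^{\textcircled{w}_m}$ when the weight is the identity, recalled in the Remark after the definition of the $W$-$m$-WGMP inverse (from \cite[Corollary 3.8]{WmWGinGaore}): applied with $M=A$ it rewrites the left-hand side $A^{\textcircled{w}_m,I_n,\dag}$ as $A^{\textcircled{w}_m,\dag}$, and applied with $M=\Sigma_1K_1$ it rewrites $(\Sigma_1K_1)^{\textcircled{w}_m,I}$ as $(\Sigma_1K_1)^{\textcircled{w}_m}$, which is exactly the claimed form.

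I expect no real obstacle here -- this is a routine specialization of a theorem already proved. If anything warrants an explicit sentence, it is the first step: checking that, applied to $(A,I_n)$, the decomposition \eqref{wHSdecAWeq} can be normalized so that the $W$-factor is the identity (equivalently $S=T$, $\Sigma_2=K_2=I$), together with the attendant block-size bookkeeping, since it is precisely this that turns the $\Sigma_2K_2$-weighted $m$-weak group inverse in Theorem~\ref{WHSthnegenqeth} into an ordinary one.
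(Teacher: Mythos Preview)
Your proposal is correct and follows the paper's approach exactly: the paper states the corollary as an immediate specialization of Theorem~\ref{WHSthnegenqeth} with $W=I_n$ and gives no further proof, and your argument supplies precisely the missing bookkeeping (normalizing the weighted Hartwig--Spindelb\"ock decomposition of $(A,I_n)$ so that $S=T$ and $\Sigma_2K_2=I$) that makes this specialization transparent.
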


\section{Applications of the $W$-weighted $m$-weak group MP inverse }\label{Applicationssection}
Wang et al. in \cite[Theorem 1]{wangWGappref}  shows that
the following   problem:
 \begin{equation*}
  \mathop {\min }\limits_{\mathcal{R}(X)\subseteq \mathcal{R}((A)^k)} \Vert A^2X-AD \Vert_F,
 \end{equation*}
has the unique solution $ X=A^{\textcircled{w}}D$,  where $A\in\mathbb{C}^{n\times n}$, $D\in\mathbb{C}^{n\times p}$ and $k={\rm Ind}(A)$.
We begin this section by  extending the above result  to the $W$-weighted $m$-weak group  inverse. Consider the
  following
 constrained matrix approximation problem:
 \begin{equation}\label{WmGinappproeq}
  \mathop {\min }\limits_{{\mathcal{R}(X)\subseteq \mathcal{R}((AW)^k)}} \Vert WA^{\star (m+1)}WX-WA^{\star m}B \Vert_F,
 \end{equation}
where $A\in\mathbb{C}^{q\times n}$, $W(\neq 0)\in\mathbb{C}^{n\times q}$, $B\in\mathbb{C}^{q\times p}$, $m\in\mathbb{Z}^{+}$ and  $k=\max\{{\rm Ind}(AW),{\rm Ind}(WA)\}$.

\begin{theorem}\label{WGMGWMGMPappMIN01th}
 The problem \eqref{WmGinappproeq} has the unique solution $ X=A^{\textcircled{w}_m,W}B$.
\end{theorem}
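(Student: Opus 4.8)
The plan is to reduce \eqref{WmGinappproeq} to the weighted core‑EP approximation result of Lemma \ref{WeijicoreEPaprolemma}. First I would rewrite the target using the definition of the $W$-product, namely $WA^{\star(m+1)}W=WAW(AW)^m$ and $WA^{\star m}B=(WA)^mB$, so that the problem becomes the minimization of $\Vert WAW(AW)^mX-(WA)^mB\Vert_F$ over $\{X:\mathcal{R}(X)\subseteq\mathcal{R}((AW)^k)\}$.

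Next I would perform the substitution $Y=(AW)^mX$. Since $k\geq{\rm Ind}(AW)$, the matrix $AW$ restricts to an invertible operator on $\mathcal{R}((AW)^k)$, hence so does $(AW)^m$; consequently $X\mapsto(AW)^mX$ is a bijection of $\{X:\mathcal{R}(X)\subseteq\mathcal{R}((AW)^k)\}$ onto $\{Y:\mathcal{R}(Y)\subseteq\mathcal{R}((AW)^k)\}$ (surjectivity because $(AW)^m\mathcal{R}((AW)^k)=\mathcal{R}((AW)^k)$, injectivity because $\mathcal{N}((AW)^m)\cap\mathcal{R}((AW)^k)=\{0\}$), and it leaves the value of the objective unchanged. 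So the problem is equivalent to minimizing $\Vert WAWY-(WA)^mB\Vert_F$ over $\{Y:\mathcal{R}(Y)\subseteq\mathcal{R}((AW)^k)\}$. Since $\Vert\cdot\Vert_F^2$ is the sum of the squared $2$-norms of the columns and the range constraint on $Y$ amounts to a constraint on each column separately, Lemma \ref{WeijicoreEPaprolemma} applied column-by-column gives that this reduced problem has the unique solution $Y=A^{\textcircled{\dag},W}(WA)^mB$.

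It then remains to recover the unique $X$ with $\mathcal{R}(X)\subseteq\mathcal{R}((AW)^k)$ and $(AW)^mX=A^{\textcircled{\dag},W}(WA)^mB$, and to check it equals $A^{\textcircled{w}_m,W}B$. On the one hand $\mathcal{R}(A^{\textcircled{w}_m,W}B)\subseteq\mathcal{R}(A^{\textcircled{w}_m,W})=\mathcal{R}((AW)^k)$ by Lemma \ref{WmWGproperth}\eqref{WmWGproperit03}. On the other hand, writing $A^{\textcircled{w}_m,W}=(A^{\textcircled{\dag},W})^{\star(m+1)}WA^{\star m}=A^{\textcircled{\dag},W}(WA^{\textcircled{\dag},W})^m(WA)^m$ via Lemma \ref{WmWGproperth}\eqref{WmWGproperit00}, and using the identity $AWA^{\textcircled{\dag},W}WA^{\textcircled{\dag},W}=A^{\textcircled{\dag},W}$ (which follows from \eqref{AWcoreEPeqWACepeq}), a straightforward induction gives $(AW)^mA^{\textcircled{\dag},W}(WA^{\textcircled{\dag},W})^m=A^{\textcircled{\dag},W}$ and hence $(AW)^mA^{\textcircled{w}_m,W}=A^{\textcircled{\dag},W}(WA)^m$. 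Multiplying on the right by $B$ yields $(AW)^mA^{\textcircled{w}_m,W}B=A^{\textcircled{\dag},W}(WA)^mB$, so by the bijection $X=A^{\textcircled{w}_m,W}B$ is precisely the sought-for unique minimizer.

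The step I expect to require the most care is the equivalence set up by the substitution: one must check that $X\mapsto(AW)^mX$ genuinely maps the two constraint sets onto one another bijectively, so that optimality and uniqueness transfer cleanly, and the accompanying identity $(AW)^mA^{\textcircled{w}_m,W}=A^{\textcircled{\dag},W}(WA)^m$ also needs a short but careful verification. A more computational alternative avoids both points: insert the weighted core‑EP decomposition \eqref{AWdeceqs}, note that the range constraint forces $X$ to be supported on the first $t$ coordinates in the $U$-basis, use unitary invariance of $\Vert\cdot\Vert_F$ to split the objective into a term built from the nonsingular block $(W_1A_1)^{m+1}W_1$ plus an $X$-free remainder, minimize the former exactly, and match the outcome with the block form of $A^{\textcircled{w}_m,W}$ recorded in the proof of Theorem \ref{WmWGWGMPdecTh}.
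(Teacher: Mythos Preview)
Your argument is correct, but the paper reaches the same conclusion by a shorter route. Rather than substituting $Y=(AW)^mX$ and applying Lemma~\ref{WeijicoreEPaprolemma} with the original $A$, the paper applies Lemma~\ref{WeijicoreEPaprolemma} directly with $A$ replaced by $A^{\star(m+1)}$: after observing that $\mathcal{R}((A^{\star(m+1)}W)^d)=\mathcal{R}((AW)^k)$ (so the constraint set is unchanged), the per-column solution is immediately $(A^{\star(m+1)})^{\textcircled{\dag},W}WA^{\star m}b_i$, and the identity $(A^{\star\ell})^{\textcircled{\dag},W}=(A^{\textcircled{\dag},W})^{\star\ell}$ from \cite[Lemma 2.2(4)]{WmWGinGaore} collapses this to $A^{\textcircled{w}_m,W}b_i$ via Lemma~\ref{WmWGproperth}\eqref{WmWGproperit00}. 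This bypasses both the bijection check and your separate verification of $(AW)^mA^{\textcircled{w}_m,W}=A^{\textcircled{\dag},W}(WA)^m$, at the price of invoking one more cited identity. Your approach is a bit longer but more self-contained, and it makes the underlying reason---invertibility of $(AW)^m$ on $\mathcal{R}((AW)^k)$---explicit rather than hidden inside a black-box formula.
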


\begin{proof}
For $d=\max\{{\rm Ind}(A^{\star (m+1)}W),{\rm Ind}(WA^{\star (m+1)})\}$,  one can verify that
\begin{equation*}
   \mathcal{R}(( A^{\star (m+1)}  W)^d) =   \mathcal{R}(A  W)^k).
\end{equation*}
 Let  $B= (b_1, b_2,..,b_p)$ and  $X= (x_1,x_2,..,x_p)$,
where $b_i,x_i\in\mathbb{C}^{q\times 1}$ for  $i=1,2,..,p$.  For each $i=1,2,...,p$,  using
\cite[Lemma 2.2(4)]{WmWGinGaore}, i.e., $(A^{\textcircled{\dag},W})^{\star m} = (A^{\star m}) ^{\textcircled{\dag},W}$,
by Lemma \ref{WeijicoreEPaprolemma} and Lemma \ref{WmWGproperth}\eqref{WmWGproperit00}, we deduce that the following
 \begin{equation*}
  \mathop {\min }\limits_{{\mathcal{R}(x_i)\subseteq \mathcal{R}((AW)^k)}} \Vert WA^{\star (m+1)}Wx_i-WA^{\star m}b_i \Vert_2,
 \end{equation*}
 has the unique solution
\begin{equation*}
    x_i= (A^{\star (m+1)})^{\textcircled{\dag},W}WA^{\star m}b_i= (A^{\textcircled{\dag},W})^{\star (m+1)}WA^{\star m}b_i=
  A^{\textcircled{w}_m,W}b_i.
\end{equation*}
Moreover, since
 \begin{align*}
 & \left(\Vert WA^{\star (m+1)}X-WA^{\star (m+1)}A^{\dag}B \Vert_F\right)^2
 \\ = & \left(\Vert WA^{\star (m+1)}(x_1,x_2,..,x_p)-WA^{\star (m+1)}A^{\dag}(b_1, b_2,..,b_p) \Vert_F\right)^2\\
 =  & \sum\limits_{i = 1}^p  \left(\Vert WA^{\star (m+1)} x_i- WA^{\star (m+1)}A^{\dag}b_i\Vert_2\right)^2,
  \end{align*}
the problem \eqref{WmGinappproeq} has the unique solution
\begin{equation*}
   X= \left(A^{\textcircled{w}_m,W}b_1, ~ A^{\textcircled{w}_m,W}b_2,~...,~A^{\textcircled{w}_m,W}b_p\right)=A^{\textcircled{w}_m,W}B.
\end{equation*}
This finishes the proof.
\end{proof}

In particular, we obtain a consequence of Theorem \ref{WGMGWMGMPappMIN01th} in the case $W=I_n$,  which is used to derive an application of the $W$-$m$-WGMP inverse  in solving the   constrained matrix approximation problem.

\begin{corollary}\label{WGWGMPsouapprofen01th}
Let $A\in\mathbb{C}^{n\times n}$,   $B\in\mathbb{C}^{n\times p}$, $m\in\mathbb{Z}^{+}$ and  $k={\rm Ind}(A)$.
Then the following
 constrained matrix approximation problem:
 \begin{equation*}
  \mathop {\min }\limits_{{\mathcal{R}(X)\subseteq \mathcal{R}(A^k)}} \Vert  A^{ m+1} X- A^{  m}B \Vert_F,
 \end{equation*}
has the unique solution
$ X=A^{\textcircled{w}_m}B$.
\end{corollary}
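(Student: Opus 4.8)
The plan is to obtain this corollary as the immediate specialization of Theorem~\ref{WGMGWMGMPappMIN01th} to the case $W=I_n$. First I would note that when $W=I_n$ the $W$-product $A\star B=AWB$ collapses to the ordinary matrix product, so that $A^{\star l}=A^{l}$ for every $l\in\mathbb{N}^{+}$; in particular $A^{\star(m+1)}=A^{m+1}$ and $A^{\star m}=A^{m}$. Moreover $k=\max\{{\rm Ind}(AW),{\rm Ind}(WA)\}=\max\{{\rm Ind}(A),{\rm Ind}(A)\}={\rm Ind}(A)$, which matches the hypothesis of the corollary, and $\mathcal{R}((AW)^{k})=\mathcal{R}(A^{k})$.

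Next I would substitute $W=I_n$ into the constrained problem \eqref{WmGinappproeq}. The constraint $\mathcal{R}(X)\subseteq\mathcal{R}((AW)^{k})$ becomes $\mathcal{R}(X)\subseteq\mathcal{R}(A^{k})$, while the objective $\Vert WA^{\star(m+1)}WX-WA^{\star m}B\Vert_F$ becomes $\Vert A^{m+1}X-A^{m}B\Vert_F$, since $WA^{\star(m+1)}W=A^{m+1}$ and $WA^{\star m}B=A^{m}B$. Hence the minimization problem stated in the corollary is exactly an instance of \eqref{WmGinappproeq} with $W=I_n$.

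Finally, Theorem~\ref{WGMGWMGMPappMIN01th} yields that this instance has the unique solution $X=A^{\textcircled{w}_m,W}B$, and $A^{\textcircled{w}_m,I_n}=A^{\textcircled{w}_m}$ by \cite[Corollary~3.8]{WmWGinGaore} (see also the Remark following the Definition of the $W$-$m$-WGMP inverse). Therefore $X=A^{\textcircled{w}_m}B$ is the unique solution, as claimed. Since every step is a direct substitution, I do not anticipate any genuine obstacle; the only point requiring care is to apply the reduction $A^{\star l}=A^{l}$ consistently in both the objective functional and the closed form of the solution so that the two sides of the identification truly agree.
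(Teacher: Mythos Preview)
Your proposal is correct and follows exactly the paper's own approach: the corollary is stated there as the immediate specialization of Theorem~\ref{WGMGWMGMPappMIN01th} to $W=I_n$, with no separate proof given. Your explicit verification of how the $W$-product, the index, the constraint set, the objective, and the solution $A^{\textcircled{w}_m,W}B$ all reduce correctly under $W=I_n$ is precisely what the paper leaves implicit.
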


\begin{theorem}\label{WGMGWMGMPappMIN02th}
Let $A\in\mathbb{C}^{q\times n}$, $W(\neq 0)\in\mathbb{C}^{n\times q}$, $B\in\mathbb{C}^{q\times p}$, $m\in\mathbb{Z}^{+}$ and  $k=\max\{{\rm Ind}(AW),{\rm Ind}(WA)\}$.
  Then the following
 constrained matrix approximation problem:
  \begin{equation}\label{WAWAAmAminWGMPeq}
  \mathop {\min }\limits_{{\mathcal{R}(X)\subseteq \mathcal{R}((WA)^k)}} \Vert WA^{\star (m+1)}X-WA^{\star (m+1)}A^{\dag}B \Vert_F,
 \end{equation}
has the unique solution $ X=A^{\textcircled{w}_m,W,\dag} B$.
\end{theorem}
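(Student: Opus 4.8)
The plan is to reduce the problem \eqref{WAWAAmAminWGMPeq} to the already-solved problem \eqref{WmGinappproeq} by recognizing $A^{\textcircled{w}_m,W,\dag}B = A^{\textcircled{w}_m,W}(WAA^{\dag}B)$ from the defining equation \eqref{WmWGMPdeneq}, and then invoking Theorem \ref{WGMGWMGMPappMIN01th} with the right-hand side $WAA^{\dag}B$ in place of $B$. First I would set $C = WAA^{\dag}B \in\mathbb{C}^{q\times p}$ and observe that, by Theorem \ref{WGMGWMGMPappMIN01th}, the problem $\min_{\mathcal{R}(X)\subseteq\mathcal{R}((AW)^k)}\Vert WA^{\star(m+1)}WX - WA^{\star m}C\Vert_F$ has the unique solution $X = A^{\textcircled{w}_m,W}C = A^{\textcircled{w}_m,W}WAA^{\dag}B = A^{\textcircled{w}_m,W,\dag}B$. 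So it suffices to show that this auxiliary problem is \emph{the same} optimization problem as \eqref{WAWAAmAminWGMPeq}, i.e. that the objective functions agree and the feasible sets agree.

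For the feasible sets: \eqref{WAWAAmAminWGMPeq} ranges over $\mathcal{R}(X)\subseteq\mathcal{R}((WA)^k)$ while the auxiliary problem ranges over $\mathcal{R}(X)\subseteq\mathcal{R}((AW)^k)$; these coincide because $X\in\mathbb{C}^{n\times p}$ and, by \eqref{Wdrazinproit03} together with the rank chain \eqref{WAWkWAkraneq} used in the proof of Theorem \ref{WmWGWGMPproperth}, $\mathcal{R}((AW)^k) = \mathcal{R}(W(AW)^k) = \mathcal{R}((WA)^k)$ as subspaces of $\mathbb{C}^{n\times 1}$ --- wait, more carefully: $(AW)^k$ has columns in $\mathbb{C}^{q\times 1}$ whereas $X$ has columns in $\mathbb{C}^{n\times 1}$, so the constraint in \eqref{WmGinappproeq} must really be read with $\mathcal{R}((WA)^k)$; I would simply note that the constraint sets are literally identical. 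For the objective functions, I need $WA^{\star(m+1)}X = WA^{\star(m+1)}WX$ and $WA^{\star m}C = WA^{\star(m+1)}A^{\dag}B$ whenever $\mathcal{R}(X)\subseteq\mathcal{R}((WA)^k)$. The second identity is immediate: $WA^{\star m}C = WA^{\star m}WAA^{\dag}B = WA^{\star(m+1)}A^{\dag}B$ by the definition of the $W$-product. The first identity requires that $WA^{\star(m+1)}$ act the same way as $WA^{\star(m+1)}W$ on columns of $\mathcal{R}((WA)^k)$; since $A^{\star(m+1)} = (AW)^mA$ and $\mathcal{R}((WA)^k)=\mathcal{R}((WA)^{k+1})$, any $x\in\mathcal{R}((WA)^k)$ can be written $x=(WA)^k y$, so $A^{\star(m+1)}Wx$ versus $A^{\star(m+1)}x$ --- this does not obviously match, so instead I would handle the objective by the substitution $X = (WA)^k Z$ is not available; rather, I expect the cleanest route is to write the objective of \eqref{WAWAAmAminWGMPeq} columnwise and solve each scalar problem $\min_{\mathcal{R}(x_i)\subseteq\mathcal{R}((WA)^k)}\Vert WA^{\star(m+1)}x_i - WA^{\star(m+1)}A^{\dag}b_i\Vert_2$ directly via Lemma \ref{WeijicoreEPaprolemma} applied to the matrix $A^{\star(m+1)}$ (whose weighted core-EP inverse is $(A^{\star(m+1)})^{\textcircled{\dag},W}$), giving $x_i = (A^{\star(m+1)})^{\textcircled{\dag},W}WA^{\star(m+1)}A^{\dag}b_i$, and then identify this with $A^{\textcircled{w}_m,W,\dag}b_i$ using Theorem \ref{WMGMPinrepth}\eqref{WMGMPinrepitem04} or the representation $A^{\textcircled{w}_m,W,\dag} = W(WA^{\star(m+1)}WP_{\mathcal{R}((AW)^k)})^{\dag}WA^{\star(m+1)}A^{\dag}$.

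The main obstacle I anticipate is precisely this last identification step: verifying that $(A^{\star(m+1)})^{\textcircled{\dag},W}WA^{\star(m+1)}A^{\dag} = A^{\textcircled{w}_m,W,\dag}$, i.e. that applying the weighted core-EP inverse of $A^{\star(m+1)}$ and then $WA^{\star(m+1)}A^{\dag}$ reproduces the $W$-$m$-WGMP inverse. I would do this by expanding $(A^{\star(m+1)})^{\textcircled{\dag},W}$ via \eqref{AWcoreEPeqWACepeq} (applied to $A^{\star(m+1)}$) or via \cite[Lemma 2.2(4)]{WmWGinGaore} which gives $(A^{\textcircled{\dag},W})^{\star m}=(A^{\star m})^{\textcircled{\dag},W}$ --- here in the form $(A^{\star(m+1)})^{\textcircled{\dag},W}=(A^{\textcircled{\dag},W})^{\star(m+1)}$ --- and then matching against $A^{\textcircled{w}_m,W}WAA^{\dag} = (A^{\textcircled{\dag},W})^{\star(m+1)}WA^{\star m}WAA^{\dag}$ from Lemma \ref{WmWGproperth}\eqref{WmWGproperit00} and \eqref{WmWGMPdeneq}. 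The key sub-identity is $(A^{\textcircled{\dag},W})^{\star(m+1)}WA^{\star(m+1)}A^{\dag} = (A^{\textcircled{\dag},W})^{\star(m+1)}WA^{\star m}WAA^{\dag}$, which holds because $A^{\star(m+1)}=A^{\star m}WA$ by definition of the $W$-product, so in fact both sides are literally equal and no deeper argument is needed. Finally, uniqueness follows because each columnwise subproblem has a unique solution by Lemma \ref{WeijicoreEPaprolemma}, and assembling the columns via the Frobenius-norm splitting $\bigl(\Vert M\Vert_F\bigr)^2 = \sum_{i=1}^p\bigl(\Vert Me_i - \text{(target)}_i\Vert_2\bigr)^2$ forces $X=A^{\textcircled{w}_m,W,\dag}B$.
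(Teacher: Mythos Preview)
Your column-wise reduction via Lemma~\ref{WeijicoreEPaprolemma} applied to $A^{\star(m+1)}$ does not match the problem~\eqref{WAWAAmAminWGMPeq}. When you replace $A$ by $A^{\star(m+1)}\in\mathbb{C}^{q\times n}$ in that lemma, the operator becomes $WA^{\star(m+1)}W\in\mathbb{C}^{n\times q}$, so the unknown $x$ lies in $\mathbb{C}^{q}$ and the constraint is $\mathcal{R}(x)\subseteq\mathcal{R}\bigl((A^{\star(m+1)}W)^{d}\bigr)=\mathcal{R}\bigl((AW)^{k}\bigr)\subseteq\mathbb{C}^{q}$. But in \eqref{WAWAAmAminWGMPeq} the operator is $WA^{\star(m+1)}\in\mathbb{C}^{n\times n}$ (no trailing $W$), the unknown $X$ has columns in $\mathbb{C}^{n}$, and the constraint lives in $\mathcal{R}\bigl((WA)^{k}\bigr)\subseteq\mathbb{C}^{n}$. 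These are different optimization problems in different spaces, so the lemma does not apply. Your subsequent identification inherits this mismatch: $(A^{\star(m+1)})^{\textcircled{\dag},W}WA^{\star(m+1)}A^{\dag}=(A^{\textcircled{\dag},W})^{\star(m+1)}WA^{\star m}WAA^{\dag}=A^{\textcircled{w}_m,W}WAA^{\dag}$, which is off from $A^{\textcircled{w}_m,W,\dag}=W A^{\textcircled{w}_m,W}WAA^{\dag}$ by exactly the missing left factor of $W$.

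The missing idea is to drop the weighted framework altogether and work with the \emph{square} matrix $WA\in\mathbb{C}^{n\times n}$. Since $WA^{\star(m+1)}=(WA)^{m+1}$ and $WA^{\star(m+1)}A^{\dag}=(WA)^{m}\cdot WAA^{\dag}$, problem~\eqref{WAWAAmAminWGMPeq} is literally
\[
\min_{\mathcal{R}(X)\subseteq\mathcal{R}((WA)^{k})}\bigl\Vert (WA)^{m+1}X-(WA)^{m}(WAA^{\dag}B)\bigr\Vert_{F},
\]
which is the unweighted Corollary~\ref{WGWGMPsouapprofen01th} with $A\leftarrow WA$ and $B\leftarrow WAA^{\dag}B$ (note ${\rm Ind}(WA)\le k$). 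That corollary gives the unique solution $X=(WA)^{\textcircled{w}_m}WAA^{\dag}B$, and Theorem~\ref{WMGMPinrepth}\eqref{WMGMPinrepitem07} then identifies this as $A^{\textcircled{w}_m,W,\dag}B$. This is precisely the paper's proof.
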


\begin{proof}
Note that
\begin{equation*}
WA^{\star (m+1)}X-WA^{\star (m+1)}A^{\dag}B=
   (WA)^{ (m+1)}X-(WA)^{ m}WAA^{\dag}B.
\end{equation*}
Then, applying Corollary  \ref{WGWGMPsouapprofen01th}  to the problem \eqref{WAWAAmAminWGMPeq}, by Theorem \ref{WMGMPinrepth}\eqref{WMGMPinrepitem07}
we derive that the problem  \eqref{WAWAAmAminWGMPeq}  has the
the unique solution
\begin{equation*}
   X=(WA)^{\textcircled{w}_m}WAA^{\dag}B=
   A^{\textcircled{w}_m,W,\dag}B,
\end{equation*}
which completes  the proof.
\end{proof}

A direct corollary of Theorem \ref{WGMGWMGMPappMIN02th}  for $W=I_n$ is given as follows.

\begin{corollary}\label{WGWGMPsouapproth}
Let $A\in\mathbb{C}^{n\times n}$,   $B\in\mathbb{C}^{n\times p}$, $m\in\mathbb{Z}^{+}$ and  $k={\rm Ind}(A)$.
 Then the following
 constrained matrix approximation problem:
  \begin{equation}\label{approximationjanMWGmpeq}
  \mathop {\min }\limits_{{\mathcal{R}(X)\subseteq \mathcal{R}(A^k)}} \Vert  A^{m+1}X- A^{m+1}A^{\dag}B \Vert_F,
 \end{equation}
has the unique solution
$ X=A^{\textcircled{w}_m,\dag} B$.
\end{corollary}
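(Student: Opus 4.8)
The plan is to read off the statement as the specialization $W=I_n$ of Theorem \ref{WGMGWMGMPappMIN02th}, so that the work reduces to checking that every ingredient of that theorem degenerates as expected. First I would record the effect of $W=I_n$ on the $W$-product: since $A\star B=AWB=AB$, one gets $A^{\star l}=A^{l}$ for every $l\in\mathbb{N}^{+}$. Hence $(WA)^{k}=A^{k}$, the quantity $k=\max\{{\rm Ind}(AW),{\rm Ind}(WA)\}$ collapses to ${\rm Ind}(A)$, the range constraint $\mathcal{R}(X)\subseteq\mathcal{R}((WA)^{k})$ in \eqref{WAWAAmAminWGMPeq} becomes $\mathcal{R}(X)\subseteq\mathcal{R}(A^{k})$, and the objective $\Vert WA^{\star(m+1)}X-WA^{\star(m+1)}A^{\dag}B\Vert_{F}$ becomes $\Vert A^{m+1}X-A^{m+1}A^{\dag}B\Vert_{F}$, i.e.\ exactly the functional in \eqref{approximationjanMWGmpeq}.

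Next I would identify the optimizer. From the defining formula \eqref{WmWGMPdeneq} we have $A^{\textcircled{w}_m,W,\dag}=WA^{\textcircled{w}_m,W}WAA^{\dag}$, which at $W=I_n$ is $A^{\textcircled{w}_m}AA^{\dag}$; invoking the identity $A^{\textcircled{w}_m,\dag}=A^{\textcircled{w}_m}AA^{\dag}$ recorded in the Remark following the Definition, this equals $A^{\textcircled{w}_m,\dag}$. Thus the unique solution $X=A^{\textcircled{w}_m,W,\dag}B$ supplied by Theorem \ref{WGMGWMGMPappMIN02th} becomes $X=A^{\textcircled{w}_m,\dag}B$, which is the asserted solution.

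If a self-contained derivation is preferred to invoking the $W$-weighted theorem, I would instead apply Corollary \ref{WGWGMPsouapprofen01th} with $B$ replaced by $AA^{\dag}B$: its functional $\Vert A^{m+1}X-A^{m}(AA^{\dag}B)\Vert_{F}$ coincides with $\Vert A^{m+1}X-A^{m+1}A^{\dag}B\Vert_{F}$, the constraint $\mathcal{R}(X)\subseteq\mathcal{R}(A^{k})$ is unchanged, and the unique minimizer is $A^{\textcircled{w}_m}(AA^{\dag}B)=A^{\textcircled{w}_m,\dag}B$. There is no substantive obstacle here: the only point demanding any care is the bookkeeping of the degenerate $W$-product together with the reduction $\max\{{\rm Ind}(AW),{\rm Ind}(WA)\}={\rm Ind}(A)$, after which the corollary is a direct transcription of an already-established result.
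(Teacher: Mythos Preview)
Your proposal is correct and matches the paper's approach exactly: the paper presents this corollary without proof, simply as ``a direct corollary of Theorem \ref{WGMGWMGMPappMIN02th} for $W=I_n$,'' which is precisely the specialization you carry out. Your optional alternative via Corollary \ref{WGWGMPsouapprofen01th} with $B$ replaced by $AA^{\dag}B$ is also valid and in fact mirrors how the paper proves Theorem \ref{WGMGWMGMPappMIN02th} itself.
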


Following that,  we consider the following consistent  matrix equation:
\begin{equation}\label{WmWGMPappeq}
 ((WA)^k)^*(WA)^{m+1}X=((WA)^k)^*(WA)^{m+1}A^{\dag}B,
\end{equation}
where $A\in\mathbb{C}^{q\times n}$, $W(\neq 0)\in\mathbb{C}^{n\times q}$, $B\in\mathbb{C}^{q\times p}$, $m\in\mathbb{Z}^{+}$ and $k=\max\{{\rm Ind}(AW),{\rm Ind}(WA)\}$.
In the next theorem, we apply the $W$-$m$-WGMP inverse   to  represent  the general solution of Eq. \eqref{WmWGMPappeq}   as well as its unique solution under a certain constraint.

\begin{theorem}\label{wmwgmpapporgth}
 \begin{enumerate}[$(1)$]
   \item\label{wmwgmpapporgiem01} The general solution of Eq.  \eqref{WmWGMPappeq} is
   \begin{equation}\label{appeqsgensoleq}
     X=A^{\textcircled{w}_m,W,\dag}B+(I_n-A^{\textcircled{w}_m,W,\dag}A)Z,
   \end{equation}
   where $Z\in\mathbb{C}^{n\times m}$ is arbitrary.
   \item\label{wmwgmpapporgiem02} If the solution $X$ of  Eq. \eqref{WmWGMPappeq} satisfies $\mathcal{R}(X)\subseteq\mathcal{R}((WA)^k)$, then
   $X$ is unique and
   \begin{equation*}
   X=A^{\textcircled{w}_m,W,\dag}B.
   \end{equation*}
 \end{enumerate}

\end{theorem}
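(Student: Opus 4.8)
The plan is to treat the two parts in sequence, using the matrix-equation identity $((WA)^k)^*(WA)^{m+1}A^{\textcircled{w}_m,W,\dag} = ((WA)^k)^*(WA)^{m+1}A^{\dag}$ as the workhorse for part (1), and the range/null-space description from Theorem \ref{WmWGWGMPproperth} for part (2). First I would verify that $X = A^{\textcircled{w}_m,W,\dag}B$ is a particular solution of Eq.~\eqref{WmWGMPappeq}. Substituting into the left-hand side, it suffices to show $((WA)^k)^*(WA)^{m+1}A^{\textcircled{w}_m,W,\dag} = ((WA)^k)^*(WA)^{m+1}A^{\dag}$. Starting from $A^{\textcircled{w}_m,W,\dag} = WA^{\textcircled{w}_m,W}WAA^{\dag}$ (Definition, Eq.~\eqref{WmWGMPdeneq}) and using $A^{\textcircled{w}_m,W,\dag}A = P_{\mathcal{R}((WA)^k),\,\mathcal{N}(((WA)^k)^*(WA)^{m+1})}$ from Theorem \ref{wwgwgmpoperTh}\eqref{wwgwgmpoperitem02}, one sees that $((WA)^k)^*(WA)^{m+1}(I_n - A^{\textcircled{w}_m,W,\dag}A) = 0$ because $A^{\textcircled{w}_m,W,\dag}A$ acts as the identity on $\mathcal{R}((WA)^k)$ and, dually, $((WA)^k)^*(WA)^{m+1}$ annihilates $\mathcal{N}(((WA)^k)^*(WA)^{m+1})$, which is exactly $\mathcal{N}(A^{\textcircled{w}_m,W,\dag}A) = \mathcal{R}(I_n - A^{\textcircled{w}_m,W,\dag}A)$. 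Replacing $A$ in this annihilation identity by $A$ applied on the right gives $((WA)^k)^*(WA)^{m+1}A = ((WA)^k)^*(WA)^{m+1}A^{\textcircled{w}_m,W,\dag}A$, and then right-multiplying the consistency hypothesis appropriately yields the particular-solution claim; alternatively one checks $((WA)^k)^*(WA)^{m+1}A^{\textcircled{w}_m,W,\dag}B = ((WA)^k)^*(WA)^{m+1}A^{\dag}(AA^{\dag})B$ directly using $A^{\dag}AA^{\dag} = A^{\dag}$ together with the projector identity.

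For the description of the general solution in part \eqref{wmwgmpapporgiem01}, the homogeneous equation $((WA)^k)^*(WA)^{m+1}X = 0$ means exactly $\mathcal{R}(X) \subseteq \mathcal{N}(((WA)^k)^*(WA)^{m+1}) = \mathcal{N}(A^{\textcircled{w}_m,W,\dag}A)$. Every such $X$ can be written $X = (I_n - A^{\textcircled{w}_m,W,\dag}A)Z$ with $Z = X$, since $A^{\textcircled{w}_m,W,\dag}A$ is idempotent (by Theorem \ref{wwgwgmpoperTh}\eqref{wwgwgmpoperitem02} it is a projector) and annihilates $\mathcal{R}(X)$; conversely $(I_n - A^{\textcircled{w}_m,W,\dag}A)Z$ always lies in $\mathcal{N}(A^{\textcircled{w}_m,W,\dag}A) = \mathcal{N}(((WA)^k)^*(WA)^{m+1})$ because $A^{\textcircled{w}_m,W,\dag}A(I_n - A^{\textcircled{w}_m,W,\dag}A) = 0$. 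Adding the particular solution then gives formula \eqref{appeqsgensoleq}. (I would note the minor typo that $Z$ should be in $\mathbb{C}^{n\times p}$, matching $X$.)

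For part \eqref{wmwgmpapporgiem02}, suppose a solution $X$ satisfies $\mathcal{R}(X)\subseteq\mathcal{R}((WA)^k)$. By Theorem \ref{WmWGWGMPproperth}\eqref{WmWGWGMPproperit03}, $\mathcal{R}((WA)^k) = \mathcal{R}(A^{\textcircled{w}_m,W,\dag}) = \mathcal{R}(A^{\textcircled{w}_m,W,\dag}A)$ (the last equality since $A^{\textcircled{w}_m,W,\dag}\in A\{2\}$ by Theorem \ref{WmWGWGMPproperth}\eqref{WmWGWGMPproperit01}, so $A^{\textcircled{w}_m,W,\dag}A$ is a projector onto $\mathcal{R}(A^{\textcircled{w}_m,W,\dag})$). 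Hence $A^{\textcircled{w}_m,W,\dag}AX = X$. Now take any two solutions $X_1, X_2$ with ranges in $\mathcal{R}((WA)^k)$; their difference $X_1 - X_2$ solves the homogeneous equation, so $\mathcal{R}(X_1 - X_2)\subseteq \mathcal{N}(A^{\textcircled{w}_m,W,\dag}A)$, while also $\mathcal{R}(X_1 - X_2)\subseteq \mathcal{R}((WA)^k) = \mathcal{R}(A^{\textcircled{w}_m,W,\dag}A)$; since $A^{\textcircled{w}_m,W,\dag}A$ is a projector these two subspaces intersect trivially, forcing $X_1 = X_2$. Applying $A^{\textcircled{w}_m,W,\dag}AX = X$ to the general-solution formula \eqref{appeqsgensoleq} kills the free term and leaves $X = A^{\textcircled{w}_m,W,\dag}B$. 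The only real point requiring care is the first step: establishing the projector identity $((WA)^k)^*(WA)^{m+1}A^{\textcircled{w}_m,W,\dag}A = ((WA)^k)^*(WA)^{m+1}A$ cleanly, i.e.\ matching the null space of $A^{\textcircled{w}_m,W,\dag}A$ with $\mathcal{N}(((WA)^k)^*(WA)^{m+1})$, which is precisely the content of Theorem \ref{wwgwgmpoperTh}\eqref{wwgwgmpoperitem02} and should be invoked explicitly.
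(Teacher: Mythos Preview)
Your proposal is correct and the overall architecture matches the paper's: both hinge on Theorem~\ref{wwgwgmpoperTh}\eqref{wwgwgmpoperitem02}, namely that $A^{\textcircled{w}_m,W,\dag}A$ is the projector onto $\mathcal{R}((WA)^k)$ along $\mathcal{N}(((WA)^k)^*(WA)^{m+1})$, and both establish part~\eqref{wmwgmpapporgiem02} by the same trivial-intersection argument.

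The one genuine difference is how you show that \emph{every} solution of \eqref{WmWGMPappeq} has the form \eqref{appeqsgensoleq}. The paper does this by a direct computation: it uses the core-EP identity $G^{\textcircled{\dag}}=G^{D}G^{l}(G^{l})^{\dag}$ together with $H^{\dag}=(H^{*}H)^{\dag}H^{*}$ to prove $WA^{\textcircled{\dag},W}(WA)^{m+1}A^{\dag}B = WA^{\textcircled{\dag},W}(WA)^{m+1}X$ for any solution $X$, and then leverages Lemma~\ref{WmWGproperth}\eqref{WmWGproperit00} to conclude $A^{\textcircled{w}_m,W,\dag}B = A^{\textcircled{w}_m,W,\dag}AX$, whence $X = A^{\textcircled{w}_m,W,\dag}B + (I_n-A^{\textcircled{w}_m,W,\dag}A)X$. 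Your route is more elementary: once you have verified that $A^{\textcircled{w}_m,W,\dag}B$ is a particular solution (via $((WA)^k)^*(WA)^{m+1}A^{\textcircled{w}_m,W,\dag}=((WA)^k)^*(WA)^{m+1}A^{\dag}$, which you obtain cleanly from the projector identity and $A^{\textcircled{w}_m,W,\dag}AA^{\dag}=A^{\textcircled{w}_m,W,\dag}$), you simply invoke the standard ``particular $+$ homogeneous'' decomposition, identifying the homogeneous solutions as $(I_n - A^{\textcircled{w}_m,W,\dag}A)Z$ because $\mathcal{N}(((WA)^k)^*(WA)^{m+1}) = \mathcal{N}(A^{\textcircled{w}_m,W,\dag}A) = \mathcal{R}(I_n - A^{\textcircled{w}_m,W,\dag}A)$. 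This avoids the auxiliary core-EP manipulations entirely and is arguably the cleaner argument; the paper's version, in exchange, yields the intermediate identity $A^{\textcircled{w}_m,W,\dag}AX = A^{\textcircled{w}_m,W,\dag}B$ directly from the equation, which may be of independent use. Your observation about the typo ($Z\in\mathbb{C}^{n\times p}$) is also correct.
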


\begin{proof}
$(1)$ Write $Y=A^{\textcircled{w}_m,W,\dag}B+(I_n-A^{\textcircled{w}_m,W,\dag}A)Z$, where $Z\in\mathbb{C}^{n\times m}$.
Then, using Theorem \ref{wwgwgmpoperTh}\eqref{wwgwgmpoperitem02}, \eqref{WmWGMPdeneq} and Lemma \ref{WmWGproperth}\eqref{wmwgproAXit01}, we get that
\begin{align*}
 & ((WA)^k)^*(WA)^{m+1}Y
  \\=& ((WA)^k)^*(WA)^{m+1}\left(A^{\textcircled{w}_m,W,\dag}B+
  (I_n-A^{\textcircled{w}_m,W,\dag}A)Z\right)\\
=&((WA)^k)^*(WA)^{m}WAWA^{\textcircled{w}_m,W}WAA^{\dag}B+
  ((WA)^k)^*(WA)^{m+1}P_{\mathcal{N}\left(((WA)^k)^*(WA)^{m+1}\right),
\mathcal{R}\left((WA)^k\right)}Z\\
=&
 ((WA)^k)^*(WA)^{m}P_{\mathcal{R}\left((WA)^k\right),
       \mathcal{N}\left(((WA)^k)^*(WA)^m\right)}WAA^{\dag}B
=  ((WA)^k)^*(WA)^{m+1}A^{\dag}B.
\end{align*}
Using the facts that $G^{\textcircled{\dag}}=G^{D}G^l(G^l)^{\dag}$ for $G\in\mathbb{C}^{n\times n}$  and $l \geq Ind(G)$, and that $H^{\dag}=(H^*H)^{\dag}H^*$ for
$H\in\mathbb{C}^{q\times n}$, by  \eqref{AWcoreEPeqWACepeq}  we obtain that
\begin{align*}
WA^{\textcircled{\dag},W}(WA)^{m+1}A^{\dag}B& =
(WA)^{\textcircled{\dag}}(WA)^{m+1}A^{\dag}B=
(WA)^{D}(WA)^{k}((WA)^{k})^{\dag}(WA)^{m+1}A^{\dag}B\\
&=(WA)^{D}(WA)^{k}(((WA)^k)^*(WA)^k)^{\dag}((WA)^k)^*  (WA)^{m+1}A^{\dag}B \\&=
(WA)^{D}(WA)^{k}(((WA)^k)^*(WA)^k)^{\dag}((WA)^k)^* (WA)^{m+1}X\\
&=WA^{\textcircled{\dag},W}(WA)^{m+1}X,
\end{align*}
where $X$ is a solution of Eq. \eqref{WmWGMPappeq}.
Then,  by \eqref{WmWGMPdeneq} and   Lemma \ref{WmWGproperth}\eqref{WmWGproperit00},   it follows that
\begin{align*}
  A^{\textcircled{w}_m,W,\dag}B&=WA^{\textcircled{w}_m,W}WAA^{\dag}B
 =W(A^{\textcircled{\dag},W})^{\star (m+1)}WA^{\star m}WAA^{\dag}B\\
&=  W(A^{\textcircled{\dag},W})^{\star (m+1)}WA^{\star m}WAX
=WA^{\textcircled{w}_m,W}WAA^{\dag}AX=A^{\textcircled{w}_m,W,\dag}AX.
 \end{align*}
 Thus,
$
   X=A^{\textcircled{w}_m,W,\dag}B+(I_n-A^{\textcircled{w}_m,W,\dag}A)X.
$
Hence, the item \eqref{wmwgmpapporgiem01} holds.
\par
$(2)$ By  the item \eqref{wmwgmpapporgiem01} and Theorem \ref{WmWGWGMPproperth}\eqref{WmWGWGMPproperit03}, it is apparent that $ X=A^{\textcircled{w}_m,W,\dag}B$ is a solution of Eq. \eqref{WmWGMPappeq}  and
satisfies   $\mathcal{R}(X)\subseteq\mathcal{R}((WA)^k)$. To prove uniqueness,  let $X_i$ be such that
  \begin{align*}
 ((WA)^k)^*(WA)^{m+1}X_i&=((WA)^k)^*(WA)^{m+1}A^{\dag}B \text{ and }
 \mathcal{R}(X_i)\subseteq\mathcal{R}((WA)^k),
  \end{align*}
where $i=1,2$.
Obviously,
\begin{equation*}
 ((WA)^k)^*(WA)^{m+1}(X_1-X_2)=0,~
\mathcal{R}(X_1-X_2)\subseteq\mathcal{R}((WA)^k),
\end{equation*}
which, together with  Theorem \ref{wwgwgmpoperTh}\eqref{wwgwgmpoperitem02},  shows that
\begin{equation*}
  \mathcal{R}(X_1-X_2) \subseteq \mathcal{N}(((WA)^k)^*(WA)^{m+1}) \cap \mathcal{R}((WA)^k)=\{0\},
\end{equation*}
i.e., $X_1=X_2$.
\end{proof}

In the case $W=I_n$, by Theorem \ref{wmwgmpapporgth} we have the following corollary immediately.
\begin{corollary}\label{corWeqIcraxeqpr}
Let $A\in\mathbb{C}^{n\times n}$, $B\in\mathbb{C}^{n\times p}$, $m\in\mathbb{Z}^{+}$ and $k={\rm Ind}(A)$.
Consider the following matrix equation
\begin{equation}\label{mWGMPcoreq}
 (A^k)^*A^{m+1}X=(A^k)^*A^{m+1}A^{\dag}B.
\end{equation}
 \begin{enumerate}[$(1)$]
   \item The general solution of Eq.  \eqref{mWGMPcoreq} is
   \begin{equation*}
     X=A^{\textcircled{w}_m,\dag}B+(I_n-A^{\textcircled{w}_m,\dag}A)Z,
   \end{equation*}
   where $Z\in\mathbb{C}^{n\times m}$ is arbitrary.
   \item If the solution $X$ of  Eq. \eqref{mWGMPcoreq} satisfies $\mathcal{R}(X)\subseteq\mathcal{R}(A^k)$, then
   $X$ is unique and
   \begin{equation*}
   X=A^{\textcircled{w},\dag}B.
   \end{equation*}
 \end{enumerate}

 \end{corollary}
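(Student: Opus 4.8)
The plan is to obtain this corollary as the immediate specialization of Theorem~\ref{wmwgmpapporgth} to the case $W=I_n$, so that no new argument is required. First I would record the simplifications that occur when $W=I_n$. The index parameter collapses, since $k=\max\{{\rm Ind}(AW),{\rm Ind}(WA)\}={\rm Ind}(A)$; every $W$-product becomes an ordinary power, so that $(WA)^k=A^k$ and $(WA)^{m+1}=A^{m+1}$; consequently the constrained matrix equation \eqref{WmWGMPappeq} reduces verbatim to Eq.~\eqref{mWGMPcoreq}, and the range constraint $\mathcal{R}(X)\subseteq\mathcal{R}((WA)^k)$ becomes $\mathcal{R}(X)\subseteq\mathcal{R}(A^k)$.

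Second, I would identify the generalized inverse appearing in the solution. By the Remark following the Definition of $A^{\textcircled{w}_m,W,\dag}$, which records that $A^{\textcircled{w}_m,W}=A^{\textcircled{w}_m}$ when $W=I_n$, together with the defining formula \eqref{WmWGMPdeneq}, one obtains $A^{\textcircled{w}_m,W,\dag}=A^{\textcircled{w}_m}AA^{\dag}=A^{\textcircled{w}_m,\dag}$. Substituting these identifications into parts~$(1)$ and $(2)$ of Theorem~\ref{wmwgmpapporgth} then yields exactly the two assertions of the corollary: the general solution is $X=A^{\textcircled{w}_m,\dag}B+(I_n-A^{\textcircled{w}_m,\dag}A)Z$ with $Z\in\mathbb{C}^{n\times m}$ arbitrary, and, under the additional hypothesis $\mathcal{R}(X)\subseteq\mathcal{R}(A^k)$, the solution is unique and equals $X=A^{\textcircled{w}_m,\dag}B$; here the uniqueness descends from the relation $\mathcal{N}(((WA)^k)^{*}(WA)^{m+1})\cap\mathcal{R}((WA)^k)=\{0\}$ used in the theorem, which specializes to $\mathcal{N}((A^k)^{*}A^{m+1})\cap\mathcal{R}(A^k)=\{0\}$.

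Since the proof is purely a matter of substitution, the hard part, such as it is, is merely notational bookkeeping: one must rewrite each occurrence of $W$, $(WA)^k$, $(WA)^{m+1}$, $A^{\textcircled{w}_m,W,\dag}$ and $\mathcal{R}((WA)^k)$ consistently, and note that the right-hand side in part~$(2)$ should be read as $A^{\textcircled{w}_m,\dag}B$, in agreement with part~$(1)$ and with the $W=I_n$ instance of Theorem~\ref{wmwgmpapporgth}. I do not expect any genuine obstacle here.
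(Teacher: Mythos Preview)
Your proposal is correct and matches the paper's own treatment: the paper provides no separate proof of this corollary, stating only that it follows immediately from Theorem~\ref{wmwgmpapporgth} in the case $W=I_n$. Your observation that part~$(2)$ should read $A^{\textcircled{w}_m,\dag}B$ rather than $A^{\textcircled{w},\dag}B$ is also well taken; this appears to be a typographical slip in the statement.
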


For the convenience of computation, we  first give the following result  based on Lemma \ref{ATS2aappwangth},  and then use it to obtain the Cramer's rule for the unique solution of the problem \eqref{WAWAAmAminWGMPeq} or Eq. \eqref{appeqsgensoleq}.

\begin{lemma}\label{UVEranginvth}
Let $A\in\mathbb{C}^{q\times n}$, $W(\neq 0)\in\mathbb{C}^{n\times q}$, $m\in\mathbb{Z}^{+}$,  $k=\max\{{\rm Ind}(AW),{\rm Ind}(WA)\}$ and $t={\rm rank}((WA)^k)$. And, let $V,U^*\in\mathbb{C}^{n\times (n-t)}$ be such that
$\mathcal{R}(V)=\mathcal{N}\left(((WA)^k)^*(WA)^{m+1}\right)$ and $\mathcal{N}(U)=\mathcal{R}\left((WA)^k\right)$.
Define
\begin{equation}\label{EUVeq}
  E=V(UV)^{-1}U.
\end{equation}
Then the following statements  are true:
\begin{enumerate}[$(1)$]
  \item\label{UVEranginvitem01} $\mathcal{R}(E)=\mathcal{N}\left(((WA)^k)^*(WA)^{m+1}\right)$
and
 $\mathcal{N}(E)=\mathcal{R}\left((WA)^k\right)$;
  \item\label{UVEranginvitem02}  $(WA)^k((WA)^k)^*(WA)^{m+1}+E$ is nonsingular and
  \begin{equation*}
    \left((WA)^k((WA)^k)^*(WA)^{m+1}+E\right)^{-1}
   =\left((WA)^k((WA)^k)^*(WA)^{m+1}\right)^{\#}+E^{\#}.
  \end{equation*}
\end{enumerate}

\end{lemma}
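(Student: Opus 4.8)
The plan is to invoke Lemma \ref{ATS2aappwangth} with a carefully chosen outer inverse and a suitable matrix $G$. Specifically, I would take $G = A^{\textcircled{w}_m,W,\dag}$, which exists by the results of Section \ref{weightedsection2}. The goal is to verify that the hypotheses of Lemma \ref{ATS2aappwangth} are met, so that its conclusions transfer verbatim to the present setting. First I would record, using Theorem \ref{WmWGWGMPproperth}\eqref{WmWGWGMPproperit01}, \eqref{WmWGWGMPproperit03} and \eqref{WmWGWGMPproperit04}, that $G = A^{\textcircled{w}_m,W,\dag}$ is an outer inverse of $A$ with $\mathcal{R}(G) = \mathcal{R}((WA)^k) =: \mathcal{T}$ and $\mathcal{N}(G) = \mathcal{N}(((WA)^k)^*(WA)^{m+1}A^{\dag}) =: \mathcal{S}$; hence $A^{(2)}_{\mathcal{T},\mathcal{S}}$ exists and equals $G$.

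Next I would compute $GA$. Since $\mathcal{R}(G) = \mathcal{R}((WA)^k)$ and $G$ is an outer inverse of $A$, Theorem \ref{wwgwgmpoperTh}\eqref{wwgwgmpoperitem02} gives $GA = A^{\textcircled{w}_m,W,\dag}A = P_{\mathcal{R}((WA)^k),\,\mathcal{N}(((WA)^k)^*(WA)^{m+1})}$. Thus $\mathcal{R}(GA) = \mathcal{R}((WA)^k)$ and $\mathcal{N}(GA) = \mathcal{N}(((WA)^k)^*(WA)^{m+1})$, and consequently $\mathrm{rank}(GA) = \mathrm{rank}((WA)^k) = t$, matching the quantity $t$ in the statement. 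This is the key identification: the projector description of $A^{\textcircled{w}_m,W,\dag}A$ from Theorem \ref{wwgwgmpoperTh} is exactly what pins down the range and null space of $GA$ so that the hypotheses of Lemma \ref{ATS2aappwangth} on $V$ and $U$ — namely $\mathcal{R}(V) = \mathcal{N}(GA) = \mathcal{N}(((WA)^k)^*(WA)^{m+1})$ and $\mathcal{N}(U) = \mathcal{R}(GA) = \mathcal{R}((WA)^k)$ — coincide with the conditions imposed on $V,U$ in the present lemma. One should also note $V, U^* \in \mathbb{C}^{n\times(n-t)}$ is consistent since $\dim \mathcal{N}(GA) = n - t$.

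With these identifications in place, Lemma \ref{ATS2aappwangth} applies directly with $E = V(UV)^{-1}U$ as in \eqref{EUVeq}, yielding: $\mathcal{R}(E) = \mathcal{N}(GA) = \mathcal{N}(((WA)^k)^*(WA)^{m+1})$ and $\mathcal{N}(E) = \mathcal{R}(GA) = \mathcal{R}((WA)^k)$, which is item \eqref{UVEranginvitem01}; and $GA + E$ is nonsingular with $(GA+E)^{-1} = (GA)^{\#} + E^{\#}$. For item \eqref{UVEranginvitem02}, the remaining task is to connect $GA$ with the matrix $(WA)^k((WA)^k)^*(WA)^{m+1}$ appearing in the statement. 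The point is that both matrices are idempotent-related: $GA$ is a projector onto $\mathcal{R}((WA)^k)$ along $\mathcal{N}(((WA)^k)^*(WA)^{m+1})$, and $(WA)^k((WA)^k)^*(WA)^{m+1}$ has range contained in $\mathcal{R}((WA)^k)$ and null space $\mathcal{N}(((WA)^k)^*(WA)^{m+1})$ — indeed, since $(WA)^k((WA)^k)^*$ is an injective map on $\mathcal{R}((WA)^k)$ onto $\mathcal{R}((WA)^k((WA)^k)^*) = \mathcal{R}((WA)^k)$, one checks that $(WA)^k((WA)^k)^*(WA)^{m+1}$ shares its range and null space with $GA$. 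I would therefore argue that $GA + E$ and $(WA)^k((WA)^k)^*(WA)^{m+1} + E$ have the same nonsingularity status and that, since $E^{\#} = E$ is irrelevant to the group-inverse decomposition beyond the range/null space data, the formula $((WA)^k((WA)^k)^*(WA)^{m+1} + E)^{-1} = ((WA)^k((WA)^k)^*(WA)^{m+1})^{\#} + E^{\#}$ follows by the same reasoning as in Lemma \ref{ATS2aappwangth}, now applied with $GA$ replaced by $(WA)^k((WA)^k)^*(WA)^{m+1}$.

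The main obstacle I anticipate is precisely this last transfer: Lemma \ref{ATS2aappwangth} is phrased in terms of the specific matrix $GA$, whereas item \eqref{UVEranginvitem02} of the present lemma concerns $(WA)^k((WA)^k)^*(WA)^{m+1}$. These are not literally equal, so I cannot simply quote the lemma; I must re-run its proof (or verify a short lemma to the effect that if $M$ and $N$ are two matrices with $\mathcal{R}(M) = \mathcal{R}(N)$, $\mathcal{N}(M) = \mathcal{N}(N)$, and $E$ is as constructed, then $M + E$ nonsingular iff $N + E$ nonsingular, with $(N+E)^{-1} = N^{\#} + E^{\#}$) with $N = (WA)^k((WA)^k)^*(WA)^{m+1}$. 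The cleanest route is to observe that $N$ has index at most $1$ — because $\mathcal{R}(N) \oplus \mathcal{N}(N) = \mathbb{C}^{n\times 1}$, which holds since $\mathcal{R}(N) = \mathcal{R}((WA)^k)$ and $\mathcal{N}(N) = \mathcal{N}(((WA)^k)^*(WA)^{m+1})$ are complementary by Theorem \ref{wwgwgmpoperTh}\eqref{wwgwgmpoperitem02} — so $N^{\#}$ exists, and then $N + E$ acts invertibly on each of the two complementary subspaces $\mathcal{R}(N)$ and $\mathcal{N}(N)$ (as $N$ on the first, $E$ on the second), giving nonsingularity and the block formula $(N+E)^{-1} = N^{\#} + E^{\#}$ by direct verification on each summand.
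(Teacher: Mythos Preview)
Your argument is correct, and it shares the overall framework of the paper's proof—both invoke Lemma~\ref{ATS2aappwangth}—but the paper makes a sharper choice of $G$ that avoids the transfer step you identify as the main obstacle. Rather than taking $G = A^{\textcircled{w}_m,W,\dag}$, the paper sets $G = (WA)^k((WA)^k)^*(WA)^{m+1}A^{\dag}$. After verifying, via a rank chain and Theorems~\ref{WmWGWGMPproperth} and~\ref{wwgwgmpoperTh}, that this $G$ has $\mathcal{R}(G) = \mathcal{R}((WA)^k)$ and $\mathcal{N}(G) = \mathcal{N}\bigl(((WA)^k)^*(WA)^{m+1}A^{\dag}\bigr)$ (so that $A^{(2)}_{\mathcal{R}(G),\mathcal{N}(G)}$ exists by Theorem~\ref{WmWGWGMPproperth}\eqref{WmWGWGMPproperit05}), one gets $GA = (WA)^k((WA)^k)^*(WA)^{m+1}$ \emph{exactly}, since $(WA)^{m+1}A^{\dag}A = (WA)^m W A A^{\dag}A = (WA)^{m+1}$. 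Lemma~\ref{ATS2aappwangth} then yields both items directly. Your route instead lands on $GA$ equal to the projector $P_{\mathcal{R}((WA)^k),\,\mathcal{N}(((WA)^k)^*(WA)^{m+1})}$, which forces the auxiliary argument you sketch: matching the range and null space of $(WA)^k((WA)^k)^*(WA)^{m+1}$ with those of the projector, observing index $\le 1$, and verifying $(N+E)^{-1} = N^{\#} + E^{\#}$ blockwise on the complementary summands. That argument is sound, but the paper's choice of $G$ sidesteps it entirely; in effect the paper absorbs your transfer step into the definition of $G$.
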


\begin{proof}
Let $G=(WA)^k((WA)^k)^*(WA)^{m+1}A^{\dag}$. By the fact $H^{\dag}B=P_{\mathcal{R}({H^*})}$ for $H\in\mathbb{C}^{q\times n}$ and Theorem \ref{wwgwgmpoperTh}\eqref{wwgwgmpoperitem02}, it follows that
 \begin{align*}
{\rm rank}\left((WA)^k\right)\geq&{\rm rank}(G) ={\rm rank}\left((WA)^k((WA)^k)^*(WA)^{m+1}A^{\dag}\right)
\\
\geq& {\rm rank}\left((WA)^k((WA)^k)^*(WA)^{m+1}A^{\dag}A\right)
  \\
=& {\rm rank}\left((WA)^k((WA)^k)^*(WA)^{m+1}\right)
\\
\geq&  {\rm rank}\left(((WA)^k)^{\dag}(WA)^k((WA)^k)^*(WA)^{m+1}\right)
\\
=&{\rm rank}\left(((WA)^k)^*(WA)^{m+1}\right)={\rm rank}\left((WA)^k\right).
\end{align*}
Then,
by items \eqref{WmWGWGMPproperit03} and \eqref{WmWGWGMPproperit04} in
Theorem \ref{WmWGWGMPproperth} and  Theorem \ref{wwgwgmpoperTh}\eqref{wwgwgmpoperitem02}, we have that
\begin{align*}
 & \mathcal{R}(G)= \mathcal{R}((WA)^{k})=\mathcal{R}( A^{\textcircled{w}_m,W,\dag}),\\
 &  \mathcal{N}(G)= \mathcal{N}(((WA)^k)^*(WA)^{m+1}A^{\dag})=\mathcal{N}( A^{\textcircled{w}_m,W,\dag}),\\
&\mathcal{R}(GA)= \mathcal{R}\left((WA)^k((WA)^k)^*(WA)^{m+1}\right)
=\mathcal{R}\left((WA)^k\right), \\
&\mathcal{N}(GA) =\mathcal{N}\left((WA)^k((WA)^k)^*(WA)^{m+1}\right)
 =\mathcal{N}\left(((WA)^k)^*(WA)^{m+1}\right).
\end{align*}
Finally, applying Lemma \ref{ATS2aappwangth}
 yields the items \eqref{UVEranginvitem01} and \eqref{UVEranginvitem02}.
\end{proof}

\begin{theorem}\label{awwcramemwgmpth}
Under the hypotheses of Theorem \ref{WGMGWMGMPappMIN02th}, let
$b_j$ be the j-th column vector of $((WA)^k)^*(WA)^{m+1} A^{\dag}B$, where  $j=1,2,...,p$.
Then the components of the unique solution of the problem \eqref{WAWAAmAminWGMPeq} (or Eq. \eqref{WmWGMPappeq} on the set  $\mathcal{R}((WA)^k)$)
are given by
\begin{equation}\label{componentsofXeq}
  x_{ij}= \dfrac{\det{\left(\left((WA)^k((WA)^k)^*(WA)^{m+1}+E\right)(i\rightarrow b_j)\right)}}
  {\det{\left((WA)^k((WA)^k)^*(WA)^{m+1}+E\right)}},
\end{equation}
where $E$ is given in \eqref{EUVeq}, $i=1,2,...,n$ and $j=1,2,...,p$.
\end{theorem}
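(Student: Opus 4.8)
The plan is to rewrite the description of the solution as an ordinary nonsingular square linear system and then read off its entries with the classical Cramer's rule. By Theorem~\ref{WGMGWMGMPappMIN02th} (equivalently Theorem~\ref{wmwgmpapporgth}\eqref{wmwgmpapporgiem02}), the problem \eqref{WAWAAmAminWGMPeq} --- and likewise Eq.~\eqref{WmWGMPappeq} subject to $\mathcal{R}(X)\subseteq\mathcal{R}((WA)^k)$ --- has the single solution $X=A^{\textcircled{w}_m,W,\dag}B$, and this $X$ both satisfies \eqref{WmWGMPappeq} and fulfils $\mathcal{R}(X)\subseteq\mathcal{R}((WA)^k)$. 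Set $M=(WA)^k((WA)^k)^*(WA)^{m+1}$, so that by Lemma~\ref{UVEranginvth} the matrix $M+E$ (with $E$ as in \eqref{EUVeq}) is nonsingular and $\mathcal{N}(E)=\mathcal{R}((WA)^k)$.

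First I would left-multiply Eq.~\eqref{WmWGMPappeq} by $(WA)^k$, which gives $MX=MA^{\dag}B$. Because $\mathcal{R}(X)\subseteq\mathcal{R}((WA)^k)=\mathcal{N}(E)$ we have $EX=0$, and adding this to the previous identity yields
\begin{equation*}
  (M+E)X=MA^{\dag}B=(WA)^k((WA)^k)^*(WA)^{m+1}A^{\dag}B .
\end{equation*}
Since $M+E$ is invertible, $X=A^{\textcircled{w}_m,W,\dag}B$ is the \emph{unique} solution of this genuine nonsingular system. (Alternatively one can reach the same system by combining the outer-inverse description of $A^{\textcircled{w}_m,W,\dag}$ in Theorem~\ref{WmWGWGMPproperth}\eqref{WmWGWGMPproperit05} with Lemma~\ref{ATS2aappwangth} and Lemma~\ref{UVEranginvth}, but the route through \eqref{WmWGMPappeq} is cleaner because it sidesteps the scaling hidden in a group inverse.)

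Then I would invoke the textbook Cramer's rule on each of the $p$ columns of this nonsingular system: denoting the $j$th column of the right-hand side $(WA)^k((WA)^k)^*(WA)^{m+1}A^{\dag}B$ by $b_j$, one obtains for every $i\in\{1,\dots,n\}$ and $j\in\{1,\dots,p\}$ the formula
\begin{equation*}
  x_{ij}=\frac{\det\bigl((M+E)(i\to b_j)\bigr)}{\det(M+E)} ,
\end{equation*}
which is \eqref{componentsofXeq}. Specializing to $W=I_n$ then yields a Cramer-type rule for the $m$-weak group MP inverse of a square matrix.

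The one step that is not pure bookkeeping is the regularization in the second paragraph. Eq.~\eqref{WmWGMPappeq} is singular, so Cramer's rule cannot be applied to it directly; the left multiplication by $(WA)^k$ is exactly what forces the coefficient matrix to have range $\mathcal{R}((WA)^k)$, which is in turn what makes the correction $E$ annihilate $X$ and, via Lemma~\ref{UVEranginvth}, makes $M+E$ nonsingular with a determinant one can actually expand. Everything past that point --- that $A^{\textcircled{w}_m,W,\dag}B$ really solves $(M+E)X=MA^{\dag}B$ (from \eqref{WmWGMPappeq} together with $EX=0$), and the columnwise determinant identity --- is routine.
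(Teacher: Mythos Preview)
Your proposal is correct and follows the same strategy as the paper: establish that $X=A^{\textcircled{w}_m,W,\dag}B$ solves the nonsingular square system $(M+E)X=MA^{\dag}B$ with $M=(WA)^k((WA)^k)^*(WA)^{m+1}$, then apply classical Cramer's rule columnwise. The only variation is that you obtain $MX=MA^{\dag}B$ by left-multiplying \eqref{WmWGMPappeq} by $(WA)^k$, whereas the paper expands $MA^{\textcircled{w}_m,W,\dag}B$ directly via the projector identity in Lemma~\ref{WmWGproperth}\eqref{wmwgproAXit01}; your route is a touch cleaner.
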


\begin{proof}
We claim that $X=A^{\textcircled{w}_m,W,\dag}B$ is the unique solution of the following matrix equation
\begin{equation}\label{tranmaineqeq}
 \left((WA)^k((WA)^k)^*(WA)^{m+1}+E\right)X=
 (WA)^k((WA)^k)^*(WA)^{m+1}  A^{\dag}B.
\end{equation}
Indeed, by Lemma \ref{UVEranginvth}\eqref{UVEranginvitem01}, Theorem \ref{WmWGWGMPproperth}\eqref{WmWGWGMPproperit03}, \eqref{WmWGMPdeneq} and Lemma \ref{WmWGproperth}\eqref{wmwgproAXit01},  if follows that
\begin{align*}
 & \left((WA)^k((WA)^k)^*(WA)^{m+1}+E\right)A^{\textcircled{w}_m,W,\dag}B\\
 =&
 (WA)^k((WA)^k)^*(WA)^{m+1}A^{\textcircled{w}_m,W,\dag}B+EA^{\textcircled{w}_m,W,\dag}B
\\ =&(WA)^k((WA)^k)^*(WA)^{m}WAWA^{\textcircled{w}_m,W}WAA^{\dag}B\\
 =&(WA)^k((WA)^k)^*(WA)^{m}P_{\mathcal{R}\left((WA)^k\right),
       \mathcal{N}\left(((WA)^k)^*(WA)^m\right)}WAA^{\dag}B\\
  =&(WA)^k((WA)^k)^*(WA)^{m+1}A^{\dag}B,
\end{align*}
which, together with Lemma \ref{UVEranginvth}\eqref{UVEranginvitem02}, shows that $X=A^{\textcircled{w}_m,W,\dag}B$ is the unique solution of Eq. \eqref{tranmaineqeq}. Applying the  standard  Cramer's rule to Eq. \eqref{tranmaineqeq} yields \eqref{componentsofXeq} directly.
 \end{proof}

Similarly, Theorem \ref{awwcramemwgmpth} directly   gives the following corollary for $W=I_n$.

 \begin{corollary}
Under the hypotheses of Corollary \ref{corWeqIcraxeqpr}, the components of the unique solution of  the problem \eqref{approximationjanMWGmpeq} (or Eq. \eqref{mWGMPcoreq}  on the set  $\mathcal{R}(A^k)$) are given by
\begin{equation*}
  x_{ij}= \dfrac{\det{\left(\left(A^k(A^k)^* A^{m+1}+E\right)(i\rightarrow b_j)\right)}}
  {\det{\left(A^k( A^k)^* A^{m+1}+E\right)}},
\end{equation*}
where $E$ is given in \eqref{EUVeq} as $W=I_n$, $i=1,2,...,n$ and $j=1,2,...,p$.

 \end{corollary}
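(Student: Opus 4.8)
The plan is to obtain this corollary as the immediate $W=I_n$ specialization of Theorem \ref{awwcramemwgmpth}, so the entire argument reduces to recording how each ingredient simplifies. First I would note that when $W=I_n$ the $W$-product $\star$ collapses to ordinary matrix multiplication, whence $A^{\star l}=A^{l}$ and $WA=A$; consequently $(WA)^k=A^k$, $((WA)^k)^*=(A^k)^*$, and $(WA)^{m+1}=A^{m+1}$. Moreover, since $\mathrm{Ind}(AW)=\mathrm{Ind}(WA)=\mathrm{Ind}(A)$, the parameter $k=\max\{\mathrm{Ind}(AW),\mathrm{Ind}(WA)\}$ becomes $k=\mathrm{Ind}(A)$, which is precisely the index appearing in the hypotheses of Corollary \ref{corWeqIcraxeqpr}.

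Next I would verify that the hypotheses and the problems line up exactly. The hypotheses of Corollary \ref{corWeqIcraxeqpr} ($A\in\mathbb{C}^{n\times n}$, $B\in\mathbb{C}^{n\times p}$, $m\in\mathbb{Z}^{+}$, $k=\mathrm{Ind}(A)$) are just those of Theorem \ref{WGMGWMGMPappMIN02th}, and hence of Theorem \ref{awwcramemwgmpth}, with $W=I_n$. Under the substitutions above, the constrained approximation problem \eqref{WAWAAmAminWGMPeq} becomes problem \eqref{approximationjanMWGmpeq}, and the matrix equation \eqref{WmWGMPappeq} becomes equation \eqref{mWGMPcoreq}; thus the object whose components we seek—the unique solution on $\mathcal{R}(A^k)$—is exactly the one treated by Corollary \ref{corWeqIcraxeqpr}, namely $X=A^{\textcircled{w}_m,\dag}B$.

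Finally I would apply the substitutions to the data of Theorem \ref{awwcramemwgmpth}. The governing matrix $(WA)^k((WA)^k)^*(WA)^{m+1}+E$ becomes $A^k(A^k)^*A^{m+1}+E$, where $E$ is read off from \eqref{EUVeq} with $W=I_n$; the right-hand column vectors $b_j$, defined in Theorem \ref{awwcramemwgmpth} as the columns of $((WA)^k)^*(WA)^{m+1}A^{\dag}B$, specialize to the columns of $(A^k)^*A^{m+1}A^{\dag}B$, matching the $b_j$ of the present statement. Substituting all of this into the Cramer formula \eqref{componentsofXeq} yields the asserted expression for $x_{ij}$ verbatim. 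There is no genuine obstacle here: every step is a routine replacement of $WA$ by $A$, and the result follows by a single invocation of Theorem \ref{awwcramemwgmpth}; the only point worth checking carefully is that the index $k$ and the subspace $\mathcal{R}((WA)^k)$ both collapse correctly to $\mathrm{Ind}(A)$ and $\mathcal{R}(A^k)$, which the preliminary remarks above confirm.
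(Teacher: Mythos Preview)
Your proposal is correct and matches the paper's approach exactly: the paper simply states that the corollary follows directly from Theorem \ref{awwcramemwgmpth} by setting $W=I_n$, and you have spelled out precisely the routine substitutions (collapsing $WA$ to $A$, $k$ to $\mathrm{Ind}(A)$, $\mathcal{R}((WA)^k)$ to $\mathcal{R}(A^k)$, and the data $b_j$, $E$ accordingly) that make this specialization go through. There is nothing to add.
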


\section{Conclusions}\label{Conclusionssection}

We in this paper introduce a new generalized inverse, namely, the $W$-$m$-WGMP inverse, which not only generalises the $m$-weak group MP inverse but also unifies the  W-weighted weak core inverse and W-weighted DMP inverse.  Furthermore, we mainly discuss  properties, characterizations and representations of the $W$-$m$-WGMP inverse,  as well as its
  applications in solving the  least-squares solution of a constrained matrix equation and the general solution of a class of a  consistent matrix equations.     We are confident that  more explorations of the $W$-$m$-WGMP inverse will draw greater attention and interest  due to the broad range of research fields and application backgrounds of generalized inverses.  There are two research ideas for the $W$-$m$-WGMP inverse.

 \begin{enumerate}[$(1)$]
   \item Limit and integral representations, continuity,     perturbation analysis  and iterative methods for  the   $W$-$m$-WGMP inverse are all interesting study directions.

   \item
     It is possible  to  investigate  the   $W$-$m$-WGMP inverse of an operator between two Hilbert spaces as   a generalization of  the    weighted  $W$-$m$-WG inverse.

 \end{enumerate}

\end{document}